\newtheorem{theo}{Theorem}[section]
\newtheorem{remark}[theo]{Remark}
\newtheorem{assumption}[theo]{Assumption}
\newtheorem{proposition}[theo]{Proposition}
\newtheorem{corol}[theo]{Corollary}
 \numberwithin{equation}{section}
\def\beq{\begin{equation}}
\def\beqn{\begin{eqnarray*}}
\def\eeq{\end{equation}}
\def\eeqn{\end{eqnarray*}}
\def\beqa{\begin{eqnarray}}
\def\eeqa{\end{eqnarray}}
\def\cF{{\mathcal F}}
\def\cA{{\mathcal A}}
\def\calA{{\mathcal A}}
\def\R{\mathbb{R}}
\def\eps{\varepsilon}
\def\u{\tilde{u}}
\def\Q_n{M_n}
\def\Z{\mathbb{Z}}
\def\calV{\mathcal{V}_{d,L}}
\def\calT{\mathcal{T}_{d,L}}
\def\calL{\mathcal{L}}
\def\N{\mathbb{N}}
\DeclareMathOperator{\diam}{diam}
\title[Nonlinear interpolation inequalities and pattern formation in biomembranes]{Nonlinear interpolation inequalities with fractional Sobolev norms and pattern formation in biomembranes}
\author{J. Ginster, A. Pe\v{s}i\'{c}, B. Zwicknagl}
\date{\today}
\begin{document}

\keywords{Scaling law $\mathbf{\cdot}$ interpolation inequalities $\mathbf{\cdot}$ fractional Sobolev norms $\mathbf{\cdot}$ biomembranes $\mathbf{\cdot}$ nonlocal energy $\mathbf{\cdot}$ pattern formation}

\subjclass{46E35, 47J20, 49S05, 74K15}

\begin{abstract}
We consider a one-dimensional version of a variational model for pattern formation in biological membranes. The driving term in the energy is a coupling between the order parameter and the local curvature of the membrane. We derive scaling laws for the minimal energy. As a main tool we present new nonlinear interpolation inequalities that bound fractional Sobolev seminorms in terms of a Cahn-Hillard/Modica-Mortola energy. 
\end{abstract}

\maketitle

\section{Introduction}
We consider a variational model for domain formation in biological membranes. 
We follow the hypothesis that the existence of so-called {\em lipid rafts} is driven by a coupling between the local curvature of the membrane and its local chemical composition. This ansatz forms the basis of a variational model from \cite{Komura:Langmuir:2006} (see also \cite{kawakatsu1993phase2,kawakatsu1993phase,leibler87,Seul:Science:1995,baumgart2003imaging,parthasarathy2006curvature}) which builds on the classical Canham-Helfrich energy \cite{Canham70,Helfrich73}. We focus here on a one-dimensional variant of that, namely 
\begin{equation}\label{eq:functional}
    \mathcal{F}(u,h) := \int_0^1\left( W(u) + \frac{b}{2}|u'|^2 + \frac{\sigma}{2}|h'|^2 + \frac{\kappa}{2}|h''|^2 + \Lambda uh''\right)\,d\calL^1.
\end{equation}
Let us briefly explain the different terms, for a more detailed explanation we refer to the above references or \cite{FonHayLeoZwi}. The function $u\in W^{1,2}((0,1);[-1,1])$ is the order parameter corresponding to the local chemical composition. The values $\pm 1$ represent the pure variants, and we assume for simplicity of notation that $u$ can only take values in $[-1,1]$, corresponding to local mixtures of the components. All our results can easily be generalized to functions with other $L^\infty$-bounds. To enforce some pattern formation, we assume that $u$ has average $0$ (see \eqref{eq:adm} for the precise setting). The first two terms of \eqref{eq:functional} are a Modica-Mortola/Cahn-Hillard-type energy \cite{modica1987gradient} with a  continuous double-well potential $W: [-1,1] \to [0,\infty)$ that has minima at $\{\pm 1\}$ (see Assumption \ref{ass:W} below for the precise conditions), and a term penalizing changes between regions of different composition. The parameter $b>0$ is related to the line tension. The function $h\in W^{2,2}(0,1)$ is the height profile of the membrane, and the parameters $\sigma$ and $\kappa>0$ are related to the surface tension and bending rigidity of the membrane. The term $h''$ stands for the curvature of the membrane, which in a small-slope approximation is given by the Laplacian of the profile function. The last term is the coupling term between the local composition (given by the order parameter $u$) and the local curvature of the membrane (given by the Laplacian of $h$). Note that this term can be negative. The parameter $\Lambda>0$ weights the strength of the coupling. Related coupling terms between the curvature and certain order parameters also occur in the study of surfactants at interfaces between fluids (see e.g. \cite{laradji:2000,brand-dolzmann-pluda:2023}). \\
The functional \eqref{eq:functional} is analytically challenging due to its nonconvex and nonlocal components. Therefore, establishing explicit minimizers analytically in general parameter regimes is a difficult task. There is plenty of evidence that 
the interplay of all terms lead to interesting microstructures in certain parameter regimes (see e.g. \cite{Seul:Science:1995, Komura:Langmuir:2006, ren2004soliton,elliott_hatcher_2021,ELLIOTT20106585,elliott2010surface,elliott2013computation}). Analytical studies of the  functional \eqref{eq:functional}, however, have so far mostly focused on parameter regimes in which pattern formation is not expected (see \cite{FonHayLeoZwi,brazke-et-al,GinHayPesZwi:2024,pesic-pamm-2023}). We note, however, that for a related structurally simpler sharp-interface model, sub- and supercritical parameter regimes for the energy have been identified explicitly in \cite{brazke-diss:2024}.\\
In this article, we follow the approach to study {\em scaling laws} for the infimal energy, that is, we determine the scaling behaviour of the infimal energy in terms of the problem parameters. This in particular involves an ansatz-free lower bound for the minimal energy and the construction of test functions. Roughly speaking, one would expect that for large values of the coupling parameter $\Lambda$, fine-scale patterns are formed, while for small values of $\Lambda$, optimal structures are rather uniform. We make this expectation precise in Theorem \ref{maintheo} in terms of scaling results, where it turns out that it is very subtle to make the statement quantitative in terms of the parameters. A major difficulty in identifying scaling regimes comes from the fact that due to the coupling term the energy can be negative. We focus here only on the scaling behaviour but not on the explicit constants, although they could be tracked through the proofs. Some (non-optimal) choices of constants will be included in A. Pe{\v{s}}i{\'c}' Ph.D. thesis (currently in preparation). We point out that in contrast to the sharp results from \cite{brazke-diss:2024} for a simplified model, we do not cover the full parameter range, and in particular, we leave it to future work to identify critical values of the parameters where the qualitative behaviour of the energy changes.\\
We note that the functional \eqref{eq:functional} bears similarities to well-studied nonlocal functionals like the Ohta-Kawasaki functional \cite{ohta-kawasaki1986}, where the nonlocality comes from a negative fractional Sobolev norm. It has turned out that a useful tool in the proof of scaling laws for such models are interpolation inequalities (see e.g. \cite{Choksi01,ChoksiKohnOtto99,ChoksiKohnOtto2004,choksi-et-al:08,desimone-et-al:2006,cinti-otto:2016,knuepfer-muratov-nolte:2019,brietzke-knuepfer:2023}). We follow a similar approach and present various new non-linear interpolation inequalities in arbitrary space dimensions, bounding fractional Sobolev norms in terms of Modica-Mortola-type functionals (see Section \ref{sec:interpol}). Some of these inequalities are in the spirit of results from \cite{chermisi2010singular,zeppieri41asymptotic}, where higher order nonlinear interpolation inequalities are derived for classical Sobolev seminorms, but the situation for fractional seminorms turns out to be more delicate (see Section \ref{sec:mainresult} for details). We believe that these inequalities are of independent interest and might be useful also when considering high-dimensional versions of \eqref{eq:functional}.
\\
Additionally, we discuss existence and non-existence of minimizers, and show that in some cases where one of the parameters vanishes, the infimum of the energy can be computed exactly (see Section \ref{sec:existence}). The results underline that the qualitative behaviour of the functional comes from a subtle interplay of all terms.  \\ 

The remainder of the paper is organized as follows. After setting some notation, we outline and discuss our main results in Section \ref{sec:mainresult} and state our scaling laws for \eqref{eq:functional} (Theorem \ref{maintheo}) and the new nonlinear interpolation inequalities (Theorem \ref{th:interpol}). We sketch in particular how the latter are used in the proof of the scaling law. After collecting some auxiliary results in Section \ref{sec:prelim}, we prove the interpolation inequalities involving fractional Sobolev seminorms in Section \ref{sec:interpol}. Subsequently, we prove the scaling law \ref{maintheo} in Section \ref{sec:scaling}. Finally, in Section \ref{sec:existence}, we discuss existence of minimizers. \\

\textbf{Notation.} Throughout the note, we use the following notation. For $d\in\N$, we  denote the $d$-dimensional Lebesgue measure by $\calL^d$. We sometimes write $dx$ for $d\calL^d(x)$, and similarly for other integration variables.
For $k\in\N_0$ and $p\in [1,\infty)$, we set 
\begin{eqnarray*}
    W^{k,p}_{\text{per}} &:=& \left\{ h \in W^{k,p}(0,1) : \exists~~ \Tilde{h} \in W^{k,p}_{\textrm{loc}}(\mathbb{R}) ~ 1-\text{periodic such that } h = \Tilde{h}\restriction_{(0,1)} \right\},\\
    W^{k,p}_{\text{per,vol}}&:=&\left\{u\in W^{k,p}_{\text{per}}((0,1);[-1,1]) :\ \int_0^1 u\,d\calL^1=0\right\}.
\end{eqnarray*}
When considering elements in $W^{k,p}((0,1))$, we always refer to their continuous representatives.  Note that for simplicity we restrict ourselves to the case $\int_0^1 u\,d\calL^1=0$. Since the pure phases are represented by $\{u=\pm1\}$, this setting can be seen as the case of equal volume fractions of the two phases. Note that the functional \eqref{eq:functional} does not depend on $h$ itself but only on the derivative of $h$. Hence, we can without loss of generality assume that $\int_0^1h\,d\calL^1=0$. Summarizing, we denote the set of admissible pairs $(u,h)$ for the functional \eqref{eq:functional} by
\begin{eqnarray}
    \label{eq:adm}
\calA:=\left\{(u,h)\in W^{1,2}_{\text{per,vol}} \times W^{2,2}_{\text{per}}:\ \int_0^1 h\,d\calL^1=0\right\}.
\end{eqnarray}
We focus here on periodic boundary conditions for h but we point out that, following the proofs,
similar results for the functional \eqref{eq:functional} can be shown for example for Neumann boundary conditions $h'(0)=h'(1)=0$.
 Finally, we use various light assumptions on the double-well potential $W$, see their discussion in \cite[Section 2]{Choksi01} and Remark \ref{rem:assW} below.
\begin{assumption}
    \label{ass:W}
    \begin{itemize}
        \item[(H1)] There holds $W \in C([-1,1]; [0,\infty))$ with $W^{-1}(0) = \{-1,1\}$.
        \item[(H2)] There is a constant $c_W>0$ such that for all $x\in [-1,1]$, we have $c_W\min\{|x\pm 1|^2\}\leq W(x)$.
      \item[(H3)] There exists a constant $c_W'>0$ and a function $\phi \in C^1([-1,1]) $ such that 
	\begin{eqnarray*}
	\phi'(z) = \sqrt{W(z)},\text{\quad and\quad }
	|z_1 - z_2|^2 \leq c_W' |\phi(z_1) - \phi(z_2)| \quad\text{\ for all\ }z_1,z_2 \in [-1,1].
	\end{eqnarray*}
    \end{itemize}
\end{assumption}

\begin{remark}\label{rem:assW}
Let us note that (H1) and (H2) hold for classical choices of double well-potentials $W$ such as
\[
t \mapsto 1 - t^2,\qquad t \mapsto (1-t^2)^2 \quad \text{ and } \quad t \mapsto- (1-t) \log(1-t)-  (1+t) \log(1+t) + 2\log(2).
\]
Moreover, note that (H1) and (H2) together imply (H3). Indeed, (H2) implies for $-1 \leq z_1 \leq z_2 \leq 1 $ that
\begin{align*}
|\phi(z_1)- \phi(z_2)| &\geq \sqrt{c_W} \int_{z_1}^{z_2} \min\{ 1\pm t \} \, dt = \sqrt{c_W} \int_{z_1}^{z_2} (1 - |t|) \, dt \\ &= \sqrt{c_W} \left( z_2 - z_1- \frac12 \left(z_2^2\, \text{sign}(z_2) - z_1^2 \,\text{sign}(z_1)\right) \right).
\end{align*}
Assume first that $\text{sign}(z_2) = \text{sign}(z_1)$. Without loss of generality, we consider $\text{sign}(z_2) = 1$. Then
\[
|\phi(z_1)- \phi(z_2)| \geq \sqrt{c_W} (z_2 - z_1) \left( 1 - \frac12  (z_2+ z_1)  \right) =  \frac{\sqrt{c_W}}2 (z_2 - z_1) \left( |1-z_2| + |1-z_1| \right)\geq \frac{\sqrt{c_W}}2 |z_2 - z_1|^2.
\]
On the other hand, if $\text{sign}(z_2) = 1 = - \text{sign}(z_1)$ then 
\[
|\phi(z_1)- \phi(z_2)| \geq \sqrt{c_W} \left(z_2 - z_1- \frac12 z_2^2 - \frac12 z_1^2 \right) \geq \frac{\sqrt{c_W}}2 (z_2 - z_1) \geq \frac{\sqrt{c_W}}2 |z_2 - z_1|^2.
\]
\end{remark}

\subsection{Overview and main results}\label{sec:mainresult}
In this section we explain our main results. 
Our first main result is the following scaling law for \eqref{eq:functional}.
\begin{theo}\label{maintheo}
Suppose that $W$ satisfies Assumption \ref{ass:W}. Then there exist constants $C\geq c > 0$ such that for all $\Lambda, \kappa, \sigma, b > 0$ the following holds.
If $\Lambda^2 \geq C \max\left\{b\sigma, b \kappa, (b\sigma\kappa)^{1/2}, b^{1/2}\kappa\right\}$ then
\begin{equation}\label{eq:sc1}
    - \frac{\Lambda^2}{2\kappa} \leq \inf_{\calA}\mathcal{F} \leq -c \frac{\Lambda^2}{\kappa}.
\end{equation}
If $\Lambda^2 \leq c \max\left\{b\sigma, b \kappa, (b\sigma\kappa)^{1/2}, b^{1/2}\kappa\right\}$ then 
\begin{equation}\label{eq:sc2}
    c \min\{b^{1/2},1\} \leq \inf_{\calA}\mathcal{F} \leq C \min\{b^{1/2},1\}.
\end{equation}
\end{theo}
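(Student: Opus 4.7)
The theorem contains four separate inequalities, two of which are immediate. The lower bound $-\Lambda^{2}/(2\kappa)\leq\inf\mathcal{F}$ in \eqref{eq:sc1} follows from the Young estimate $\Lambda u h''\geq -\tfrac{\Lambda^{2}u^{2}}{2\kappa}-\tfrac{\kappa}{2}(h'')^{2}$, which absorbs the bending term; using $u^{2}\leq 1$ and discarding the remaining nonnegative contributions then yields the claim. The upper bound in \eqref{eq:sc2} is obtained from two explicit test pairs: $(u,h)\equiv(0,0)$ gives $\mathcal{F}=W(0)=O(1)$, while an admissible $u$ with a single up-and-down transition of width $\sim b^{1/2}$ together with $h\equiv 0$ yields a Modica--Mortola cost $\sim b^{1/2}$. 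The minimum of the two recovers the bound $C\min\{b^{1/2},1\}$.

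\textbf{Upper bound in the large-coupling regime.} For \eqref{eq:sc1} I would first minimize $h$ out explicitly; the $h$-problem is quadratic and diagonalized by Fourier series, which reduces $\mathcal{F}$ to
\begin{equation*}
\inf_{h}\mathcal{F}(u,h)=\int_{0}^{1}\!\bigl(W(u)+\tfrac{b}{2}|u'|^{2}\bigr)d\calL^{1}-\frac{\Lambda^{2}}{2}\sum_{n\neq 0}\frac{(2\pi n)^{2}|\hat u(n)|^{2}}{\kappa(2\pi n)^{2}+\sigma}=:\tilde{\mathcal{F}}(u),
\end{equation*}
and then to test either a smoothed $\ell$-periodic square wave between $\pm 1$ with transition layers of width $\sim b^{1/2}$ or a pure sinusoid $\sin(2\pi n x)$ at a suitable frequency $n$. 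In either case, once the Fourier mass sits at a scale $\xi$ with $\kappa\xi^{2}\gtrsim\sigma$, the fractional sum is of order $1/\kappa$, giving a coupling contribution $\sim -\Lambda^{2}/\kappa$, while the residual Modica--Mortola cost is balanced against $\Lambda^{2}/\kappa$ by choosing $\ell$ or $n$ appropriately. The four thresholds $b\sigma,b\kappa,(b\sigma\kappa)^{1/2},b^{1/2}\kappa$ in the hypothesis correspond exactly to the compatibility conditions that make such a choice possible in each sub-regime: for instance, $\Lambda^{2}\geq C(b\sigma\kappa)^{1/2}$ is what lets $\ell\sim b^{1/2}\kappa/\Lambda^{2}$ stay below $\sqrt{\kappa/\sigma}$, while $\Lambda^{2}\geq Cb\sigma$ lets the sinusoidal ansatz at $n\sim\sqrt{\sigma/\kappa}$ satisfy $bn^{2}\ll\Lambda^{2}/\kappa$.

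\textbf{Lower bound in the small-coupling regime, and the main obstacle.} This is the technical heart of the argument and is where Theorem~\ref{th:interpol} becomes essential. I would again minimize $h$ out first and then bound $\tilde{\mathcal{F}}(u)$ from below. Using the weighted AM--GM estimate $\kappa\xi^{2}+\sigma\geq c_{s}\kappa^{1-s}\sigma^{s}\xi^{2(1-s)}$, valid for all $s\in[0,1]$, the fractional sum is bounded above by a constant times $\kappa^{s-1}\sigma^{-s}\|u\|_{H^{s}}^{2}$, and Theorem~\ref{th:interpol} then controls $\|u\|_{H^{s}}^{2}$ by a nonlinear combination of $\int_{0}^{1}(W(u)+\tfrac{b}{2}|u'|^{2})\,d\calL^{1}$ together with the uniform bound $|u|\leq 1$. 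Matching the interpolation exponent $s$ to whichever of $b\sigma,b\kappa,(b\sigma\kappa)^{1/2},b^{1/2}\kappa$ realizes the maximum, one absorbs the coupling into, say, half of the Modica--Mortola energy as soon as $\Lambda^{2}\leq c\max\{b\sigma,b\kappa,(b\sigma\kappa)^{1/2},b^{1/2}\kappa\}$. The leftover positive energy is bounded below by $\sim b^{1/2}$ for $b\leq 1$; for $b\geq 1$, the constraints $\int u=0$ and $|u|\leq 1$ combined with (H2) force $\int W(u)\geq c'>0$. The genuine difficulty — and the reason the inequalities of Theorem~\ref{th:interpol} on the fractional Sobolev scale are indispensable — is that the exponent $s$ has to be calibrated simultaneously against all four competing scalings in the maximum, rather than against any individual one.
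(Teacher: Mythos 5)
Your treatment of the two easy inequalities (Young's inequality for the lower bound in \eqref{eq:sc1}, the pair $(0,0)$ and a single $b^{1/2}$-wide transition with $h\equiv 0$ for the upper bound in \eqref{eq:sc2}) and your oscillatory construction for the upper bound in \eqref{eq:sc1} match the paper's strategy (there the curvature of $h$ is a square wave of frequency $n\in\{1,\lceil\sqrt{\sigma/\kappa}\rceil\}$ with amplitude optimized explicitly, and the transition width of $u$ is optimized jointly; note that a pure sinusoid alone would not do when $\Lambda^2/\kappa\ll 1$, since $\int_0^1 W(\sin(2\pi n x))\,d\calL^1$ is of order one). The genuine gap is in the lower bound of \eqref{eq:sc2}. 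After minimizing out $h$, the coupling term is $-\tfrac{\Lambda^2}{2\kappa}\sum_k\min\{1,\tfrac{\kappa}{\sigma}k^2\}|\hat u_k|^2$, and you propose to bound it by $C_s\Lambda^2\kappa^{s-1}\sigma^{-s}[u]_{H^s}^2$ and then invoke Theorem \ref{th:interpol} with a calibrated exponent $s$. This cannot reach the full regime $\Lambda^2\leq c\,(b\sigma\kappa)^{1/2}$ when $\sigma\geq\kappa$: the exponent matching that threshold is exactly $s=1/2$, where the nonlinear interpolation inequality necessarily carries the factor $|\ln\delta|$ (the paper shows in Remark \ref{rem:h12} that it is false without it), so absorption into $\tfrac12\int_0^1(W(u)+\tfrac b2|u'|^2)\,d\calL^1$ only works under the strictly stronger condition $\Lambda^2\lesssim (b\sigma\kappa)^{1/2}/|\ln\delta|$. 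Choosing $s<1/2$ instead does not help: the constant in Theorem \ref{th:interpol} blows up like $(1-2s)^{-1}$, and optimizing $s$ against the loss $(\kappa/\sigma)^{1/2-s}$ still leaves a factor of order $1/\ln(\sigma/\kappa)$ below the claimed threshold. The paper states this obstruction explicitly and overcomes it not with Theorem \ref{th:interpol} but with the structure-specific estimate of Proposition \ref{prop:basicinterpolwithoutdelta} (used as Corollary \ref{cor:lbinterpol}), which bounds $\sum_k\min\{1,|k|^2/M^2\}|\hat u_k|^2$ directly by a Modica--Mortola energy, without any logarithm, via a Clément-type piecewise affine approximation; this is the missing key ingredient in your argument.

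Two smaller points: in the case $b\geq 1$ your claim that $\int_0^1 u\,d\calL^1=0$, $|u|\leq 1$ and (H2) alone force $\int_0^1 W(u)\,d\calL^1\geq c'>0$ is not correct (a steep transition makes $\int W(u)$ arbitrarily small); the correct statement is the Modica--Mortola bound $\int_0^1(W(u)+\eps^2|u'|^2)\,d\calL^1\geq c_{MM}\min\{1,\eps\}$, i.e.\ Proposition \ref{prop:modica-mortola}, which uses both terms. Also, the remaining cases of the lower bound ($b\sigma$, $b\kappa$, $b^{1/2}\kappa$ realizing the maximum) are handled in the paper by elementary bounds ($\min\{1,\kappa k^2/\sigma\}\leq \kappa k^2/\sigma$, resp.\ $\leq 1$ with Poincar\'e, resp.\ the crude $-\Lambda^2/(2\kappa)$ bound) and do not require any fractional interpolation at all.
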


\begin{figure}[htbp]
\centering
\begin{subfigure}[b]{0.45\textwidth}
\includegraphics[width=.9\textwidth]{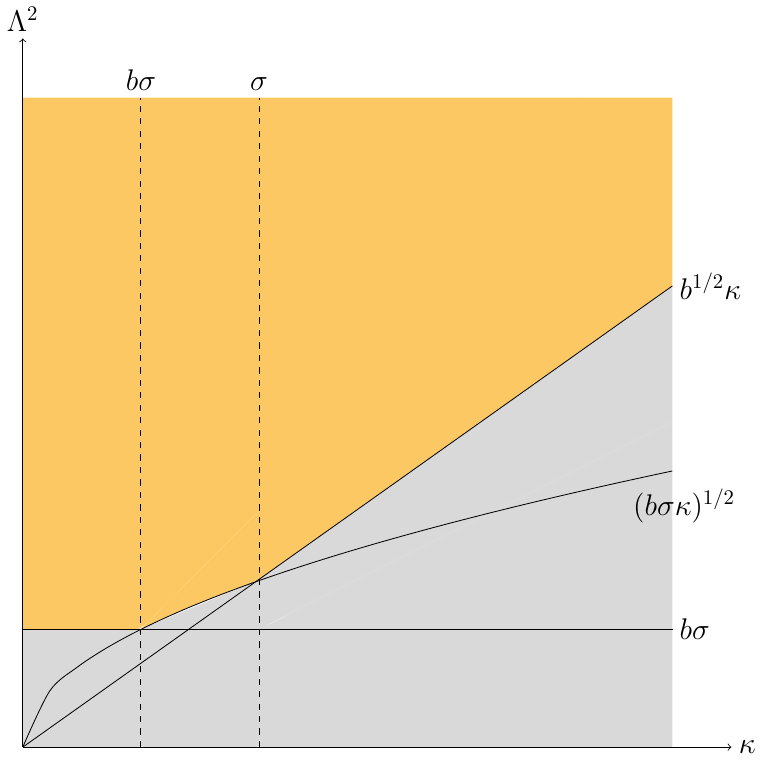}
\caption{Case $b<1$}\label{fig:b<1}
\end{subfigure}
\begin{subfigure}[b]{0.45\textwidth}
    \centering
\includegraphics[width=.9\textwidth]{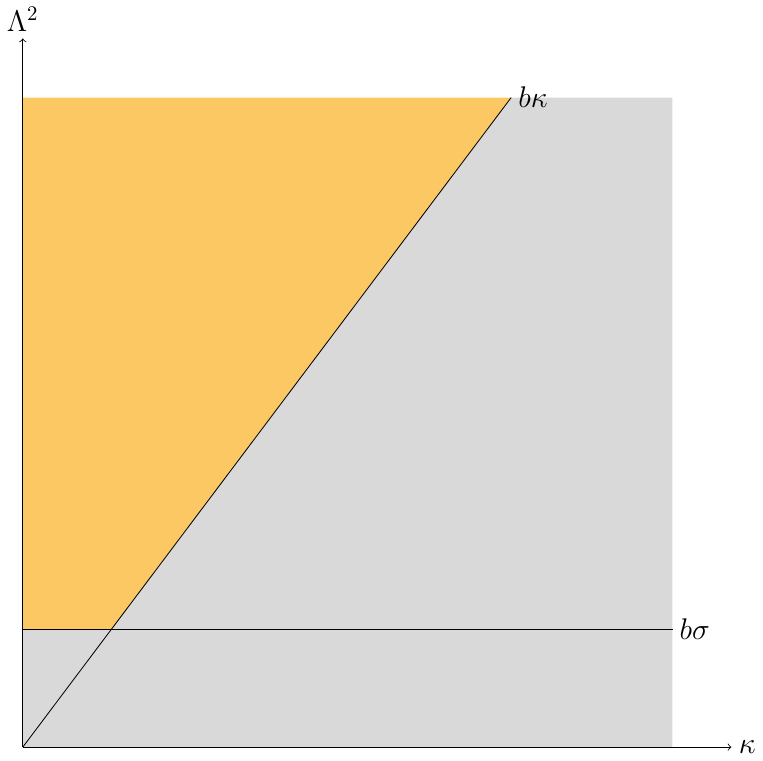}
\caption{Case $b>1$}\label{fig:b>1}
\end{subfigure}
 \caption{Sketch of the scaling regimes; regions in orange indicate parameter regimes with scaling $-\Lambda^2/\kappa$, while the gray regions correspond to regimes with scaling $\min\{b^{1/2}, 1\}$.}
    \label{fig:main}
\end{figure}
We point out that there are only three scalings of the infimal energy. While the regimes themselves depend on all parameters, the scalings of the energy do not depend on $\sigma$.
Let us briefly outline the main ideas of the proof of the scaling laws in Theorem \ref{maintheo}. 
For the upper bounds, we use three different types of test functions. To obtain the scalings $\min\{1, b^{1/2}\}$, it suffices to consider flat height profiles $h\equiv 0$ and rather uniform structures for the order parameter $u$, namely $u\equiv 0$ for $b\geq 1$ and a function $u$ that has two transition layers of length $\sim b^{1/2}$ between regions where $u=1$ and ones where $u=-1$,  see Figure \ref{fig: construction1}. Eventually, to obtain the scaling of the form $-\frac{\Lambda^2 }{\kappa}$ we make the ansatz that $h$ has a piecewise constant curvature $h''$ whose sign oscillates and a function $u$ which - up to a transition layer - is given by the negative of the sign of $h''$, see Figure \ref{fig: construction2}. In this way, the last term of the energy is essentially given by $\Lambda \int_{0}^1 h'' u \,d\calL^1 \sim -\Lambda \int_0^1|h''|\, d\calL^1$. Optimizing this only versus the term $\int_0^1 \kappa |h''|^2\, d\calL^1$ suggests to choose $|h''| = \frac{\Lambda}{2 \kappa}$ to obtain $\int_0^1 \kappa |h''|^2 + \Lambda h'' u \, d\calL^1 \sim -\frac{\Lambda^2}{2 \kappa}$. Incorporating the other terms of the energy makes this optimization more complex and includes a joint optimization in the number of oscillations and the transition length for $u$. For example, increasing the number of oscillations decreases the influence of the term $\int_0^1 \sigma |h'|^2 \, d \calL^1$ but will increase the energy coming from the terms in $u$ as every transition of $u$ from $-1$ to $+1$ (and vice versa) induces a certain amount of energy (depending on $b$). This is made precise in Section \ref{sec:ub}.

\begin{figure}
\centering
\begin{subfigure}[b]{0.4\textwidth}
\centering
\includegraphics[width=0.9\textwidth]{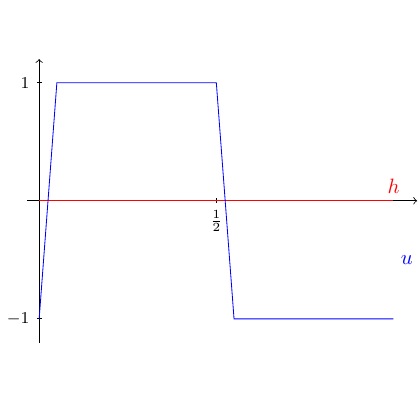}
\caption{Sketch of the configuration with a flat $h$ (red) and $u$ (blue) with one transition that satisfies $\mathcal{F}(u,h) \sim b^{1/2}$ for $0 < b \leq 1$. }
\label{fig: construction1}
\end{subfigure}
\qquad
\begin{subfigure}[b]{0.4\textwidth}
\centering
\includegraphics[width=0.9\textwidth]{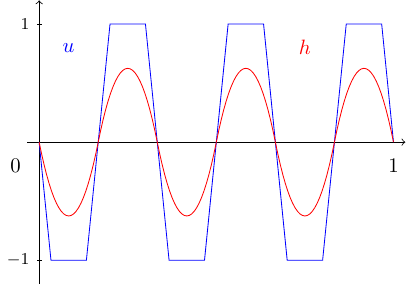}
\caption{Sketch of the regime where $\Lambda^2$ is suffciciently large. The function $h$ (red) has piecewise constant curvature with changing sign. Up to a transition layer the function $u$ (blue) is the negative of the sign of $h''$. }\label{fig: construction2}
\end{subfigure}
\end{figure}
The proof of the lower bound turns out to be more involved. The difficulty arises from the nonlocal coupling term, which generally does not have a sign. The lower bound $-\frac{\Lambda^2}{2\kappa}$ follows from Young's inequality
and the assumption that $|u|\leq 1$ (see Proposition \ref{prop:lbyoung}). On the other hand, the lower bound $\min\{b^{1/2},1\}$ is the expected scaling from the Modica Mortola energy,
\[\int_0^1\left(W(u)+\frac{b}{2}|u'|^2\right)\,d\calL^1\geq c_{MM}\min\{1,\,b^{1/2}\},\]
 see Proposition \ref{prop:modica-mortola}.
 The main task therefore is to show that for small $\Lambda>0$ (see the assumptions for \eqref{eq:sc2}), the coupling term is controlled by the other terms in the energy.
  Let us briefly remark here that these assumptions on the smallness of $\Lambda$ are significantly weaker than simply assuming $\frac{\Lambda^2}{2\kappa} \lesssim \min\{1,b^{1/2}\}$ which would ensure the lower bound in \eqref{eq:sc2} by the arguments above.
In particular, the statements of Theorem \ref{maintheo} do not imply that there are constants $c_1,\dots, c_4$ such that for all parameters $b,\sigma,\kappa, \Lambda>0$
\[c_1\min\{1,b^{1/2}\}-c_2\frac{\Lambda^2}{2\kappa}\leq \inf_\calA \cF\leq c_3\min\{1,b^{1/2}\}-c_4\frac{\Lambda^2}{2\kappa}. \]

In order to understand the argument for the lower bound, let us first optimize in $h$ to obtain the following nonlocal functional for $u$ (see e.g. ~\cite[Appendix]{FonHayLeoZwi} and the proof of Corollary \ref{cor:lbinterpol})
\begin{equation}\label{eq: nonlocal u}
\mathcal{F}(u,h) \geq \int_0^1\left( W(u) + \frac{b}2 |u'|^2 \right)\, d\calL^1 - \frac{\Lambda^2}{2 \kappa} \sum_{k \in \Z }  \frac{\kappa k^2}{\sigma + \kappa k^2} |\hat{u}_k|^2.
\end{equation}
Here, $\hat{u}_k$ denotes the $k$-th Fourier coefficient of $u$.
Hence, proving a lower bound corresponds to bounding the third term by the first two terms.
A first naive bound on the last term is given by 
\[
\frac{\Lambda^2}{2 \kappa} \sum_{k \in \Z} \frac{\kappa k^2}{\sigma + \kappa k^2} |\hat{u}_k|^2 \leq \frac{\Lambda^2}{2 \sigma} \sum_{k \in \Z }  k^2 |\hat{u}_k|^2 = \frac{\Lambda^2}{2 \sigma} \int_0^1 |u'|^2 \, d\calL^1.
\]
This yields the desired estimate if $\Lambda^2 \lesssim b \sigma$. Similarly, one can bound 
\[
\frac{\Lambda^2}{2 \kappa} \sum_{k \in \Z }  \frac{\kappa k^2}{\sigma + \kappa k^2} |\hat{u}_k|^2 \leq \frac{\Lambda^2}{2 \kappa} \sum_{k \in \Z }  |\hat{u}_k|^2 = \frac{\Lambda^2}{2 \kappa} \int_0^1 |u|^2 \, d\calL^1 \leq \frac{\Lambda^2}{2\kappa},
\]
which is essentially the same bound as in Proposition \ref{prop:lbyoung} and implies the needed control if, for example, $\Lambda^2 \gtrsim \min\{b^{1/2},1\} \kappa$.
However, in other regimes these naive bounds are not sufficient. 
At first glance, it might appear to be useful to underline the nonlocal nature of the third term in \eqref{eq: nonlocal u} by drawing a connection to a fractional Sobolev norm by estimating
\[
\frac{\Lambda^2}{2 \kappa} \sum_{k \in \Z }  \frac{\kappa k^2}{\sigma + \kappa k^2} |\hat{u}_k|^2 \lesssim \frac{\Lambda^2}{2 \kappa} \sum_{k \in \Z }  \frac{\kappa^{1/2}}{\sigma^{1/2}} |k| |\hat{u}_k|^2 = \frac{\Lambda^2}{\kappa^{1/2}\sigma^{1/2}} [u]_{H^{1/2}}^2.
\]
Then the needed control of the third term in \eqref{eq: nonlocal u} could be established through a nonlinear interpolation inequality of the form
\begin{equation}\label{eq: nonl interpol}
[u]_{H^{1/2}}^2 \leq C \int_0^1\left( \frac1{\delta} W(u) + \delta |u'|^2\right) \, d\calL^1.
\end{equation}
That such an inequality might hold could be suspected as the linear interpolation inequality, i.e., replacing $W(u)$ by $|u|^2$, is immediate (see \eqref{eq:interpollinear}). Moreover, if one replaces the fractional Sobolev space $H^{1/2}$ by $H^1$ a similar inequality was shown in \cite[Lemma 3.1]{zeppieri41asymptotic} and \cite[Theorem 1.2]{chermisi2010singular}. Precisely, for any connected open domain $\Omega\subseteq \R^d$ there are $\delta_0>0$ and $q>0$ such that for all $0<\delta\leq \delta_0$ and all $u\in W^{2,2}_{\text{loc}}(\Omega)$ there holds
 \begin{eqnarray}
 \label{eq:interpolh1}q\|Du\|_{L^2(\Omega)}^2\leq \int_\Omega\left(\frac{1}{\delta} W(u)+\delta\left|D^2 u\right|^2\right)\,d\calL^d. 
 \end{eqnarray}
However, we prove that \eqref{eq: nonl interpol} holds only with an extra factor $|\ln(\delta)|$ on the right hand side. Precisely, we prove (see Proposition \ref{prop:interpolation}) a nonlinear interpolation inequality of the form
\begin{equation}\label{eq: logarithmic}
c_L |u|_{H^{1/2}(\Pi^d)}^2 \leq |\ln \delta| \int_{\Pi^d} \left(\frac{1}{\delta} W(u) + \delta |\nabla u|^2\right)\, d\calL^d,
\end{equation}
where $\Pi^d$ denotes the $d$-dimensional torus. We also provide an example which shows that this estimate is sharp in $\delta$ (see Remark \ref{rem:h12}). Moreover, we establish the following similar nonlinear interpolation inequalities for fractional Sobolev spaces also in the non-periodic setting which are of independent interest (see Section \ref{sec:interpol}). 
\begin{theo}\label{th:interpol}
    Let $W$ satisfy (H1) and (H2), $d \in \N$, and let $\Omega \subseteq \R^d$ be open, convex and bounded.
    \begin{enumerate}
    \item If $s \in (0,1/2)$ then there exists a constant $c > 0$ such that for all $u \in W^{1,2}(\Omega)$ and all $\delta\in(0,1/2)$ there holds
   \begin{equation*}
       c \int_{\Omega} \int_{\Omega} \frac{|u(x) - u(y)|^2}{|x-y|^{d+2s}} \, dx\,dy \leq \int_{\Omega} \left(\frac{1}{\delta} W(u) + \delta |\nabla u|^2\right)\, d\calL^d.
   \end{equation*}  
   \item If $s =1/2$ then there exists a constant $c > 0$ such that for all $u \in W^{1,2}(\Omega)$ and all $\delta\in(0,1/2)$ there holds
   \begin{equation*}
       c \int_{\Omega} \int_{\Omega} \frac{|u(x) - u(y)|^2}{|x-y|^{d+1}} \, dx\,dy \leq |\ln\delta|\int_{\Omega} \left(\frac{1}{\delta} W(u) + \delta |\nabla u|^2\right)\, d\calL^d.
   \end{equation*}  
   \item If $s\in (1/2,1)$ then there exists a constant $c>0$ such that for all $u \in W^{1,2}(\Omega)$ and all $\delta\in(0,1/2)$ there holds   \begin{equation*}
        c \int_{\Omega} \int_{\Omega} \frac{|u(x) - u(y)|^2}{|x-y|^{d+2s}} \, dx\,dy \leq  \delta^{1-2s}\int_{\Omega} \left(\frac{1}{\delta} W(u) + \delta |\nabla u|^2\right)\, d\calL^d.
   \end{equation*} 
   \end{enumerate}
   \end{theo}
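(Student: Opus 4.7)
The plan is to reduce the fractional seminorm to a BV-type translation integral for $v := \phi \circ u$, where $\phi$ is the function from (H3), and then split the $h$-integral at a scale tailored to each regime of $s$. Throughout, write $E_\delta(u) := \int_\Omega(\delta^{-1}W(u) + \delta|\nabla u|^2)\,d\calL^d$ and $R := \diam(\Omega)$.

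The reduction step proceeds as follows. Young's inequality together with $\phi' = \sqrt{W}$ from (H3) gives
\begin{equation*}
\|\nabla v\|_{L^1(\Omega)} = \int_\Omega \sqrt{W(u)}\,|\nabla u|\,d\calL^d \leq \tfrac{1}{2}E_\delta(u),
\end{equation*}
while (H3) also yields the pointwise comparison $|u(x)-u(y)|^2 \leq c_W'\,|v(x)-v(y)|$. Since $\Omega$ is convex, the segment $[x,x+h]$ lies in $\Omega$ whenever $x, x+h \in \Omega$, and smooth approximation combined with the fundamental theorem of calculus along that segment yields the two translation bounds
\begin{equation*}
\int_{\Omega\cap(\Omega-h)}|v(x+h)-v(x)|\,dx \leq |h|\,\|\nabla v\|_{L^1(\Omega)},\qquad \int_{\Omega\cap(\Omega-h)}|u(x+h)-u(x)|^2\,dx \leq |h|^2\,\|\nabla u\|_{L^2(\Omega)}^2,
\end{equation*}
the second following from Jensen's inequality. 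Rewriting $[u]_{H^s}^2$ in the translation variable $h$, with $|h|\leq R$, the whole problem reduces to weighted $h$-integrals of these two estimates.

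For $s\in(0,1/2)$ no splitting is needed: the exponent $2s<1$ makes $\int_{|h|<R}|h|^{1-d-2s}\,dh$ finite, so plugging in $|u(x+h)-u(x)|^2\leq c_W'\,|v(x+h)-v(x)|$ at all scales gives $[u]_{H^s}^2 \leq C(d,s,R,c_W')\,\|\nabla v\|_{L^1} \leq C\,E_\delta(u)$. For $s\geq 1/2$ the BV bound diverges at $h=0$, so I would split at $|h|=r$: on $|h|<r$ use $|u(x+h)-u(x)|^2 \leq |h|^2\|\nabla u\|_{L^2}^2$ combined with $\delta\|\nabla u\|_{L^2}^2\leq E_\delta(u)$ to get a contribution $\lesssim r^{2-2s}\delta^{-1}\,E_\delta(u)$; on $r<|h|<R$ use the BV translation bound to get $\lesssim \ln(R/r)\,E_\delta(u)$ when $s=1/2$ and $\lesssim r^{1-2s}\,E_\delta(u)$ when $s>1/2$. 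The choice $r=\delta$ balances the two terms and produces exactly the factors $|\ln\delta|$ (after absorbing $\ln R$ using $\delta<1/2$) and $\delta^{1-2s}$ respectively.

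The main obstacle is only technical: one must carefully justify the two translation identities by smooth approximation, ensuring that the restriction $x, x+h\in\Omega$ does not introduce a boundary defect (convexity is exactly what is needed to keep the segment $[x,x+h]\subset\Omega$ and to apply the fundamental theorem of calculus along it), and then verify that the constants remain uniform in $\delta\in(0,1/2)$, which is precisely why the balanced choice $r=\delta$ is essential. No tool finer than (H3) and the basic translation estimates is required; the logarithmic factor at $s=1/2$ and the power $\delta^{1-2s}$ at $s>1/2$ are both dictated by the crossover between the $W^{1,2}$-regime for small $|h|$ and the BV-regime for larger $|h|$.
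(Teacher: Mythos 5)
Your argument is correct, and it reorganizes the paper's proof in a slightly different and arguably cleaner way. The paper (proof of Proposition \ref{prop:interpolation}, which it then transfers to the non-periodic convex setting) splits the double integral into \emph{three} regions: small translations $|x-y|<\delta$ (handled exactly as your small-scale term, via the fundamental theorem of calculus and Jensen), and for $|x-y|\ge\delta$ a dichotomy according to whether $|u(x)-u(y)|$ is small/very large (controlled pointwise by $W$ through (H2)) or of order one (controlled by the transition-cost bound $\int\sqrt{W(u)}\,|\nabla u|$ along the segment, i.e.\ by $|\nabla(\phi\circ u)|$). You instead split only in the translation variable at the single scale $r=\delta$ and treat \emph{all} large translations at once through the pointwise inequality $|u(x)-u(y)|^2\le c_W'\,|\phi(u(x))-\phi(u(y))|$ together with the $L^1$-translation estimate for $v=\phi\circ u$, so your substitution $v=\phi\circ u$ merges the paper's $\Omega_2$ and $\Omega_3$ into one estimate; the weight integrals $\int_{\delta\le|h|\le R}|h|^{1-d-2s}\,dh$ then produce the factors $1$, $|\ln\delta|$, $\delta^{1-2s}$ exactly as in the paper. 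The price is that you invoke (H3), whereas the theorem assumes only (H1)--(H2); this is harmless because Remark \ref{rem:assW} shows (H1)+(H2) imply (H3), but you should say so explicitly. Two further minor points to record: as in the paper, $u$ is implicitly $[-1,1]$-valued (otherwise $W(u)$ is undefined), so (H3) applies to the values $u(x),u(y)$; and if $\delta\ge R=\diam(\Omega)$ the large-scale region is empty and the small-scale bound over $|h|<R\le\delta$ already gives the claim, a trivial case worth one line so that the choice $r=\delta$ is legitimate for all $\delta\in(0,1/2)$. With these remarks added, your two-scale argument is a complete proof with constants depending only on $d$, $s$, $\diam(\Omega)$ and $W$, matching the statement.
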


We present the statements here only for functions in $W^{1,2}(\Omega)$ but following the lines of the proofs one can also obtain analogous bounds on fractional $W^{s,p}$-seminorms for functions $u\in W^{1,p}(\Omega)$ for more general $p\in (1,\infty)$ for suitably adapted double-well potentials. We note that the interpolation results hold in arbitrary space dimensions. We hope that they will be useful for the study of higher-dimensional variants of \eqref{eq:functional}.
Returning to the lower bound in Theorem \ref{maintheo}, let us remark that the logarithmic factor in the nonlinear fractional interpolation inequality \eqref{eq: logarithmic} implies that using this inequality can only provide the desired control under stronger conditions on the smallness of $\Lambda$ than assumed in \eqref{eq:sc2}.
In order to overcome this problem we prove for the specific expression of our problem the following sharper inequality without a logarithmic factor. 
\begin{proposition}\label{prop:basicinterpolwithoutdelta}
Let $d>1$, and suppose that $W$ satisfies Assumption \ref{ass:W}. Then there exists a constant $c>0$ such that for all $L\in\N$, all $M,\delta>0$, and all $u\in W^{1,2}_{\text{per}}(\Pi^d)$ there holds
\begin{eqnarray*}
\sum_{k\in\Z^d}\min\{1,\frac{|k|^2}{M^2}\}|\hat{u}_k|^2\leq c\left(\frac{1}{L}+\frac{L}{M^2}\right)\int_{\Pi^d}\left(\frac{1}{\delta}W(u)+\delta|\nabla u|^2\right)\,d\calL^d.
\end{eqnarray*}
\end{proposition}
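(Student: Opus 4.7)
My plan is to bypass the general nonlinear $H^{1/2}$ interpolation \eqref{eq: logarithmic}, which carries a logarithmic loss, by realising the Fourier multiplier $\min\{1,|k|^2/M^2\}$ as a Gaussian average of $\sin^2(\pi k\cdot h)$ at scale $r=1/M$, and then using Assumption (H3) to trade the quadratic finite difference $|u(x+h)-u(x)|^2$ for a linear finite difference in $\phi(u)$. The latter is controlled by the Modica--Mortola energy without any logarithmic factor through $\phi'=\sqrt{W}$ and Young's inequality.

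\textbf{Step 1 (Gaussian spectral representation).} Set $r=1/M$ and let $\rho_r(h)=(2\pi r^2)^{-d/2}e^{-|h|^2/(2r^2)}$. Parseval on the torus together with the Gaussian Fourier identity $\int_{\R^d}\rho_r(h)\cos(2\pi k\cdot h)\,dh=e^{-2\pi^2|k|^2 r^2}$ give
\[
\int_{\R^d}\rho_r(h)\int_{\Pi^d}|u(x+h)-u(x)|^2\,dx\,dh
 = 2\sum_{k\in\Z^d}\bigl(1-e^{-2\pi^2|k|^2 r^2}\bigr)|\hat u_k|^2.
\]
The elementary estimate $1-e^{-x}\ge (1-e^{-1})\min\{1,x\}$ for $x\ge 0$ then yields
\[
\sum_{k\in\Z^d}\min\{1,|k|^2/M^2\}|\hat u_k|^2 \le C \int_{\R^d}\rho_{1/M}(h)\int_{\Pi^d}|u(x+h)-u(x)|^2\,dx\,dh.
\]

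\textbf{Step 2 ($\phi$-substitution and Young).} By (H3), $|u(x+h)-u(x)|^2 \le c'_W|\phi(u(x+h))-\phi(u(x))|$ pointwise. Since $\phi(u)\in W^{1,1}_{\text{per}}(\Pi^d)$, writing $\phi(u(x+h))-\phi(u(x))=\int_0^1 h\cdot\nabla\phi(u)(x+th)\,dt$ and integrating in $x$ gives, by periodicity,
\[
\int_{\Pi^d}|\phi(u(x+h))-\phi(u(x))|\,dx \le |h|\int_{\Pi^d}\sqrt{W(u)}\,|\nabla u|\,d\calL^d \le \frac{|h|}{2}\int_{\Pi^d}\Bigl(\tfrac{1}{\delta}W(u)+\delta|\nabla u|^2\Bigr)\,d\calL^d,
\]
using $\phi'=\sqrt W$ and Young's inequality.

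\textbf{Step 3 (Assembly and AM--GM).} Inserting Step 2 into Step 1 and computing $\int_{\R^d}\rho_{1/M}(h)|h|\,dh=c_d/M$ yields
\[
\sum_{k\in\Z^d}\min\{1,|k|^2/M^2\}|\hat u_k|^2 \le \frac{C'}{M}\int_{\Pi^d}\Bigl(\tfrac{1}{\delta}W(u)+\delta|\nabla u|^2\Bigr)\,d\calL^d.
\]
AM--GM gives $\tfrac{1}{L}+\tfrac{L}{M^2}\ge \tfrac{2}{M}$ for every $L\in\NN$, so the last bound implies the claim.

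The only genuinely non-routine point is the Gaussian convolution in Step 1: any smooth kernel at scale $1/M$ whose Fourier transform is comparable to $\min\{1,|k|^2/M^2\}$ on the spectral side would serve, but the Gaussian is particularly clean because its Fourier transform is explicit and the estimate $1-e^{-x}\ge c\min\{1,x\}$ replaces any Bessel-function lower bound one would need for an indicator kernel. Conceptually, the proof succeeds because the weight $\min\{1,|k|^2/M^2\}$ can be produced from only a \emph{first-order} finite difference of $u$, so one can pass to $BV$-control of $\phi(u)$ rather than invoking the symmetric $H^{1/2}$-seminorm, which is precisely how the logarithmic loss in \eqref{eq: logarithmic} is avoided.
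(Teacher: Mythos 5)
Your proof is correct, and it takes a genuinely different route from the paper. The paper constructs a Clement-type piecewise affine interpolant $u^{(L)}$ of $u$ on a regular triangulation of $\Pi^d$ into cubes of side $1/L$, splits the left-hand side as $\|u-u^{(L)}\|_{L^2}^2+\tfrac{1}{M^2}\|\nabla u^{(L)}\|_{L^2}^2$, and controls each piece by $\tfrac{1}{L}$, respectively $\tfrac{L}{M^2}$, times the total variation $\int_{\Pi^d}|\nabla(\phi\circ u)|\,d\calL^d$, using (H3) on averaged differences over neighbouring simplices before finishing with Young's inequality; this is why both parameters $L$ and $M$ appear explicitly in the statement. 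You instead realize the multiplier $\min\{1,|k|^2/M^2\}$ spectrally, as a Gaussian average at scale $1/M$ of the squared finite differences $\int_{\Pi^d}|u(x+h)-u(x)|^2\,dx$ (via Parseval and $1-e^{-x}\geq(1-e^{-1})\min\{1,x\}$), apply (H3) pointwise to trade the quadratic difference of $u$ for a first-order difference of $\phi\circ u$, and bound the translation by $|h|\int_{\Pi^d}\sqrt{W(u)}|\nabla u|\,d\calL^d$ plus Young — all steps are valid at the stated regularity (chain rule for $\phi\in C^1$ composed with $W^{1,2}$, translation estimate on the torus, Fubini for nonnegative integrands). Your argument in fact yields the stronger, $L$-free bound with right-hand side $\tfrac{C}{M}\int_{\Pi^d}\left(\tfrac1\delta W(u)+\delta|\nabla u|^2\right)d\calL^d$, from which the stated inequality follows for every $L\in\N$ since $\tfrac1L+\tfrac{L}{M^2}\geq\tfrac2M$; the simple examples with a single interface show this $1/M$ rate is sharp, so nothing is lost. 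What the paper's discretization buys in exchange is an explicit finite-element structure (the approximant $u^{(L)}$ and the separate roles of the two error terms), whereas your kernel argument is shorter, avoids the triangulation and the choice of $L$ altogether, and, like the paper's, works in every dimension $d\geq1$ and avoids the logarithmic loss of the $H^{1/2}$ interpolation \eqref{eq: logarithmic}.
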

We note that a similar term to the one on the left-hand side is also estimated in \cite[Lemma 1]{desimone-et-al:2006} but by different quantities on the right-hand side, and in particular not by a Modica-Mortola-type energy. Moreover, the term $\sum_{k\in\Z^d}|k|^2|\hat{u}_k|^2$ corresponds to the Laplacian of $u$. We therefore believe that this estimate can also be useful to study higher-dimensional variants of \eqref{eq:functional} involving an approximated bending term $\frac{\kappa}{2}\|\Delta h\|_{L^2}^2$.

\section{Preliminaries}\label{sec:prelim}
Throughout the proof of the scaling law, the following two lower bounds on parts of the energy \eqref{eq:functional} will be frequently used. The first one is a well-known lower bound for the classical Modica-Mortola energy, which we recall for the readers' convenience.
\begin{proposition}\label{prop:modica-mortola}
 Let $W$ satisfy $(H1)$. There is a constant $c_{MM}>0$ such that for all $\eps>0$ and all $u\in W^{1,2}_{\text{per,vol}}$ 
 \[ \int_0^1 \left(W(u)+\eps^2|u'|^2\right)\,d\calL^1\geq c_{MM}\min\{1,\eps\}. \]
 We can choose $c_{MM}:=\max\left\{\min_{[-1/2,1/2]}W,\ \left( \min_{[-1/2,1/2]} W \right)^{1/2}\right\}$.
\end{proposition}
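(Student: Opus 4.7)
The plan is to run the classical Modica-Mortola dichotomy, split according to the $L^\infty$ amplitude of $u$. The key structural input from the admissible class is the zero-average constraint $\int_0^1 u\,d\calL^1=0$: combined with continuity of (the representative of) $u$ and $|u|\leq 1$, it forces, via the intermediate value theorem, the existence of a point $x_0\in[0,1]$ with $u(x_0)=0$ (the case $u\equiv 0$ being trivial). Write $m:=\min_{[-1/2,1/2]}W$, which is strictly positive by (H1).

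The first branch of the dichotomy is the flat regime $\|u\|_{L^\infty}\leq 1/2$, in which $W(u)\geq m$ pointwise and the bound $\int_0^1 W(u)\,d\calL^1\geq m$ is immediate, without using the gradient term. In the complementary branch there exists $x_1$ with, say, $u(x_1)\geq 1/2$, and I would apply the standard Young inequality $W(u)+\eps^2|u'|^2\geq 2\eps\sqrt{W(u)}\,|u'|=2\eps\,|(\Phi\circ u)'|$ with the absolutely continuous antiderivative $\Phi(t):=\int_0^t\sqrt{W(s)}\,ds$. Integrating and bounding the total variation of $\Phi\circ u$ from below by its oscillation between $x_0$ and $x_1$,
\[
\int_0^1\bigl(W(u)+\eps^2|u'|^2\bigr)\,d\calL^1\;\geq\;2\eps\,\bigl|\Phi(u(x_1))-\Phi(u(x_0))\bigr|\;\geq\;2\eps\int_0^{1/2}\!\!\sqrt{W(s)}\,ds\;\geq\;\eps\sqrt{m},
\]
where the last estimate uses $W\geq m$ on $[0,1/2]$.

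Combining the two branches yields the lower bound $\min\{m,\eps\sqrt{m}\}=\sqrt{m}\min\{\sqrt{m},\eps\}$, from which the stated inequality with $c_{MM}$ follows by a direct comparison with $\min\{1,\eps\}$. The argument is entirely classical; no step is a genuine obstacle, and the only mildly delicate point is ensuring that the final algebraic rearrangement produces the precise constant of the advertised form in all ranges of $m$ and $\eps$, which is a routine check rather than a conceptual difficulty.
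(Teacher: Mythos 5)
Your argument is essentially the paper's own proof: the same dichotomy on whether $\|u\|_{L^\infty}\leq 1/2$, with the flat case giving $\min_{[-1/2,1/2]}W$ directly and the other case handled by Young's inequality plus the antiderivative $\Phi$ of $\sqrt{W}$ and the oscillation between a zero (or negative value) of $u$ and a point where $|u|\geq 1/2$, so it is correct for the main inequality. One caveat on the step you call a routine check: your bound $\min\{m,\eps\sqrt{m}\}$ (which is exactly what the paper's proof also yields) supports $c_{MM}=\min\{m,\sqrt{m}\}$ but not the advertised $\max\{m,\sqrt{m}\}$ — e.g. for $m<1$ and $\eps\in(\sqrt{m},1)$ the max-version fails — so the "max" in the statement should be read as "min" and cannot be recovered by any rearrangement of this argument.
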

\begin{proof}
  Let $\eps>0$ and $u\in W^{1,2}_{\text{per,vol}}$. If $\max |u|\leq \frac{1}{2}$ then 
  \[\int_0^1 \left(W(u)+\eps^2|u'|^2\right)\,d\calL^1\geq\min_{t\in [-1/2,1/2]}W(t). \]
  Next, assume there exists $t_0\in (0,1)$ with $u(t_0)>1/2$. Since $\int_0^1 u \, dx =0$ and u is continuous, there is $t_1\in (0,1)$ such that $u(t_1)<0$.  Without loss of generality, we may assume that that $t_0<t_1$ (the case $t_1 < t_0$ can be treated similarly). Then 
\begin{eqnarray*}
\int_0^1 \left( W(u) + \varepsilon^2|u'|^2\right) \,d\calL^1 &\geq& \int_{t_0}^{t_1}\left(W(u) + \varepsilon^2|u'|^2 \right)\,d\calL^1\geq 2\eps \int_{t_0}^{t_1}
        \sqrt{W(u)} u'\, d\calL^1\\
        &=& 2\eps\int_{u(t_0)}^{u(t_1)} \sqrt{W(t)}\,dt
        \geq 2\eps \int_0^{1/2} \sqrt{W(t)}\,dt\\
        & \geq& \eps \left( \min_{[0,1/2]} W \right)^{1/2}.
    \end{eqnarray*}
    Eventually, we argue analogously if there exists $t_0\in (0,1)$ with $u(t_0)<-1/2$.
\end{proof}
On the other hand, we have the following lower bound on the nonlocal coupling term. 
We note that the constant $-1/2$ in the lower bound in the following Proposition is optimal, see Remark \ref{rem:youngopt} below.
\begin{proposition}\label{prop:lbyoung}
    For all parameters  $\Lambda, \kappa,\sigma, b>0$ and all Borel measurable functions $W: [-1,1]\to [0,\infty)$ 
    it holds
    \begin{equation*}
        -\frac{\Lambda^2}{2\kappa} \leq \inf_{\calA} \mathcal{F}.
    \end{equation*}
\end{proposition}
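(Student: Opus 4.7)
The plan is to dispose of the proposition through a pointwise Young inequality applied to the coupling term, exploiting the $L^\infty$-bound $|u|\leq 1$ that is built into the admissible class $\calA$.

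First I would observe that every term of $\mathcal{F}$ apart from the coupling $\Lambda u h''$ and the bending $\frac{\kappa}{2}|h''|^2$ is manifestly nonnegative: the potential $W$ takes values in $[0,\infty)$ by assumption, while $\frac{b}{2}|u'|^2$ and $\frac{\sigma}{2}|h'|^2$ are nonnegative as $b,\sigma>0$. Consequently, for any $(u,h)\in\calA$,
\begin{equation*}
\mathcal{F}(u,h)\;\geq\;\int_0^1\left(\frac{\kappa}{2}|h''|^2+\Lambda u h''\right)d\calL^1.
\end{equation*}

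Next I would apply Young's inequality pointwise in the form $\Lambda u h''=\sqrt{\kappa}\,h''\cdot\frac{\Lambda u}{\sqrt{\kappa}}\geq -\frac{\kappa}{2}|h''|^2-\frac{\Lambda^2 u^2}{2\kappa}$, which rearranges to
\begin{equation*}
\frac{\kappa}{2}|h''|^2+\Lambda u h''\;\geq\;-\frac{\Lambda^2}{2\kappa}u^2\qquad\text{a.e.\ on }(0,1).
\end{equation*}
Since $(u,h)\in\calA$ forces $u(x)\in[-1,1]$ for every $x$, we have $u^2\leq 1$ a.e., and integrating over $(0,1)$ yields
\begin{equation*}
\int_0^1\left(\frac{\kappa}{2}|h''|^2+\Lambda u h''\right)d\calL^1\;\geq\;-\frac{\Lambda^2}{2\kappa}\int_0^1 u^2\,d\calL^1\;\geq\;-\frac{\Lambda^2}{2\kappa}.
\end{equation*}
Chaining the two inequalities and taking the infimum over $\calA$ gives the claim.

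There is no genuine obstacle here; the only point requiring a bit of care is making sure one uses $|u|\le 1$ rather than an $L^2$-bound (which is why the pointwise, not integrated, Young inequality is the right tool), and noting that Assumption \ref{ass:W} is not needed for this bound—mere nonnegativity of $W$ suffices, as reflected in the statement. The same calculation also explains why the optimal constant is $-\frac{\Lambda^2}{2\kappa}$: equality in Young's inequality occurs exactly when $h''=-\frac{\Lambda}{\kappa}u$ with $|u|\equiv 1$, which is the configuration relevant for the sharpness discussion in Remark \ref{rem:youngopt}.
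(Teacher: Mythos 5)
Your proof is correct and uses essentially the same argument as the paper: Young's inequality applied to the coupling term $\Lambda u h''$ with weight $\kappa$, followed by the pointwise bound $u^2\leq 1$ from the admissible class. The only difference is cosmetic (you drop the nonnegative terms first and argue pointwise, while the paper bounds the absolute value of the coupling integral and cancels the bending term afterwards).
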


\begin{proof}
    Let $(u,h)\in \calA$ be arbitrary. Then by Young's inequality it holds
    \begin{equation*}
    \begin{split}
        \left| \int_0^1 \Lambda u h'' \,d\calL^1 \right| &\leq \int_0^1 \Lambda |u| |h''| \,d\calL^1
        \leq \int_0^1 \left(\frac{\Lambda^2 u^2}{2\kappa} + \frac{\kappa}{2}|h''|^2\right)\, d\calL^1
        \leq \frac{\Lambda^2}{\kappa} + \frac{\kappa}{2} \int_0^1 |h''|^2\, d\calL^1,
    \end{split}
    \end{equation*}
    where the last inequality follows from the boundedness assumption $|u|\leq 1$. Therefore,
    \begin{equation*}
        \begin{split}
        \mathcal{F}(u,h) &\geq \int_0^1 \left(W(u) + \frac{b}{2}|u'|^2 + \frac{\sigma}{2}|h'|^2 + \frac{\kappa}{2}|h''|^2\right) \,d\calL^1 - \left| \int_0^1 \Lambda uh'' d\calL^1 \right|\\
        &\geq \int_0^1 \left(W(u) + \frac{b}{2}|u'|^2 + \frac{\sigma}{2}|h'|^2\right) \,d\calL^1 - \frac{\Lambda^2}{2\kappa}\\
        &\geq - \frac{\Lambda^2}{2\kappa}.
        \end{split}
    \end{equation*}
\end{proof}

\section{Interpolation inequalities}\label{sec:interpol}
In this section, we present some new interpolation-type arguments involving fractional-order Sobolev semi-norms. The results here hold in arbitrary space dimension, and are later applied in $d=1$ in the proof of the Theorem \ref{maintheo}. We denote by $\Pi^d$ the $d$-dimensional torus, and by $C$ generic positive constants that may change from line to line. Sometimes, we specify dependences of the constant on certain parameters $\lambda$ by indices $C_\lambda$ or $C(\lambda)$.
\subsection{Linear interpolation with fractional Sobolev seminorms}
We first recall some basics on fractional-order Sobolev seminorms for periodic functions. 
A periodic function $u\in L^2(\Pi^d)$ can be represented as Fourier sum via
\begin{eqnarray}
\label{eq:Fourierseries}
    u(x)=\sum_{k\in\Z^d}\hat{u}_ke^{2\pi i k\cdot x}\text{\qquad with\qquad}\hat{u}_k:=\int_{\Pi^d} u(t) e^{-2\pi i k\cdot t}\, dt,
\end{eqnarray}
where we denote by $a\cdot b$ the Euclidean scalar product of vectors $a,b\in\mathbb{C}^d$.
Consider 
$u\in W^{1,2}_{\text{per}}(\Pi^d)$ and $s\in (0,1]$. Then the (fractional) Sobolev semi-norm is defined as
\[|u|_{H^s(\Pi^d)}^2=\sum_{k\in\Z^d}|k|^{2s}|\hat{u}_k|^2. \]
By H\"older's inequality, we have the (linear) interpolation estimate
\begin{eqnarray}\label{eq:interpollinear}
|u|_{H^s(\Pi^d)}^{2}&=&\left(\sum_{k\in\Z^d}|\hat{u}_k|^{2(1-s)}\cdot |k|^{2s}|\hat{u}_k|^{2s}\right)
\leq \left(\sum_{k\in\Z^d}|\hat{u}_k|^2\right)^{1-s}\left(\sum_{k\in\Z^d}|k|^2|\hat{u}_k|^2\right)^{s}\nonumber\\&=&\|u\|_{L^2(\Pi^d)}^{2(1-s)} |u|_{H^1(\Pi^d)}^{2s} = \delta^{-2(1-s) s} \|u\|_{L^2(\Pi^d)}^{2(1-s)} \delta^{2 (1-s) s}|u|_{H^1(\Pi^d)}^{2s}\nonumber\\&\leq& \delta^{-2s} \| u \|_{L^2(\Pi^d)}^2 + \delta^{2(1-s)} |u|_{H^1(\Pi^d)}^2  . \label{eq: linear interpolation}
\end{eqnarray}
By \cite[Proposition 1.3]{BenyiOh}, there is an (equivalent) integral representation (cf. also, e.g.,  \cite{hoermander03,HitchFracSob} for similar characterizations on full space)
 \begin{equation}\label{eq:equivsemi}
 c_{\text{Fl}}  \|u\|_{H^{s}(\Pi^d)}^2\leq  \int_{\Pi^d} \int_{[-1/2,1/2)^d}\frac{|u(x+y)-u(y)|^2}{|x|^{d+2s}}\,dx\,dy    \leq c_{Fu} \|u\|_{H^{s}(\Pi^d)}^2.
    \end{equation}
   Following the proof there, the constants can be chosen as 
    \begin{eqnarray*}
    c_{\text{Fu}}&:=&\int_{\R^d}\frac{\sin^2(\pi x_1)}{4|x|^{d+2s}}\,dx\text{\qquad and\qquad}
    c_{\text{Fl}}:=2^{2s-3-d}d^{-5}(1-s)^{-1} \frac{\pi^{(d-1)/2}}{\Gamma\left(\frac{(d-1)}{2} + 1\right)}.
    \end{eqnarray*}
\subsection{Nonlinear interpolation with fractional Sobolev seminorms}\label{sec:nonlininterpol1}
As discussed in Section \ref{sec:mainresult}, we will need counterparts of interpolation inequalities of the form \eqref{eq:interpollinear} where the $L^2$-norm is replaced by a term involving $W$. This has been used in the study of ''local'' approximations of \eqref{eq:functional} (see \cite[Lemma 3.1]{zeppieri41asymptotic} and \cite[Theorem 1.2]{chermisi2010singular}), see \eqref{eq:interpolh1}) and its discussion in Section \ref{sec:mainresult}.
However, for fractional Sobolev seminorms, the situation turns out to be slightly more subtle, as the following example shows. 
\begin{remark}\label{rem:h12}
 Suppose that $W$ satisfies (H1). We claim that there is no constant $c>0$ such that for all $u\in W^{1,2}_{\text{per}}$ and all $\delta>0$ there holds 
 \begin{eqnarray}\label{eq:interpolh12}
 c|u|_{H^{1/2}(0,1)}^2\leq \int_{0}^1 \left(\frac{1}{\delta} W(u)+\delta |u'|^2 \right)\,d\calL^1.\end{eqnarray}
 Indeed, consider for $0<\delta<1/8$ the function $u_\delta\in W^{1,2}_{\text{per}}((0,1);[-1,1])$ given by
 \[u_\delta(x):=\begin{cases}
     -\frac{2}{\delta}x+1&\text{\qquad if \ }x\in (0,\delta),\\
     -1&\text{\qquad if \ }x\in (\delta, \frac{1}{2}-\frac{\delta}{2}),\\
     \frac{2}{\delta}x-\frac{1}{\delta}&\text{\qquad if\ }x\in (\frac{1}{2}-\frac{\delta}{2},\frac{1}{2}+\frac{\delta}{2}),\text{\ and}\\
     1&\text{\qquad if\ }x\in (\frac{1}{2}+\frac{\delta}{2},1).
 \end{cases}\]
 Then the right-hand side of \eqref{eq:interpolh12} is uniformly bounded above since
 \[\int_0^1 \left(\frac{1}{\delta} W(u) + \delta |u'|^2\right)\, d\calL^1\leq \frac{2\delta}{\delta} \max W+\delta\cdot 2\delta\left(\frac{2}{\delta}\right)^2=2\max W+8.\]
 However, using \eqref{eq:equivsemi} the left-hand side is estimated below via (see, e.g., \cite{conti-zwicknagl:16} for a similar computation)
 \begin{eqnarray*}
 &&\int_0^1\int_{-1/2}^{1/2}\frac{|u(x+y)-u(y)|^2}{|x|^{2}}\,dx\,dy  \geq\int_{1/4}^{(1-\delta)/2}\int_{(1+\delta)/2-y}^{1/2}\frac{|u(x+y)-u(y)|^2}{|x|^{2}}\,dx\,dy\\
 &=&\int_{1/4}^{(1-\delta)/2}\int_{(1+\delta)/2-y}^{1/2}\frac{4}{|x|^{2}}\,dx\,dy=4\int_{1/4}^{(1-\delta)/2}\left(-2+\frac{1}{\frac{1+\delta}{2}-y}\right)\,dy\\
 &=&-8\left(\frac{1}{4}-\frac{\delta}{2}\right)-4\ln\left(\frac{\delta}{\frac{1}{4}+\frac{\delta}{2}}\right)\longrightarrow \infty\text{\qquad as\ }\delta\to 0.
 \end{eqnarray*}
\end{remark}
While Remark \ref{rem:h12} shows that there is no interpolation inequality of the form \eqref{eq:interpolh12}, we show below that a corresponding estimate holds for all fractional $H^s$-Sobolev norms with $s\in(0,1/2)$ (see Proposition \ref{prop:interpolation}), and that the logarithmic correction observed in Remark \ref{rem:h12} is the worst case in the interpolation inequality for the $H^{1/2}$-seminorm.

\begin{proposition}
 \label{prop:interpolation}
   Suppose that $W$ satisfies (H1) and (H2), and let $d \in \N$. 
   \begin{enumerate}
   \item If $s\in (0,1/2)$ then there 
   exists a constant $c_{L,s} > 0$ such that for all $u \in W^{1,2}_{\text{per}}(\Pi^d)$ and all $\delta\in(0,1/2)$ there holds
   \begin{equation*}
       c_{L,s} |u|_{H^s(\Pi^d)}^2 \leq \int_{\Pi^d} \left(\frac{1}{\delta} W(u) + \delta |\nabla u|^2\right)\, d\calL^d.
   \end{equation*}    
   \item For $s=1/2$, there exists a constant $c_L > 0$ such that for all $u \in W^{1,2}_{\text{per}}(\Pi^d)$ and all $\delta\in(0,1/2)$ there holds   \begin{equation}\label{eq:nonlininterpol12}
       c_L |u|_{H^s(\Pi^d)}^2 \leq |\ln \delta| \int_{\Pi^d} \left(\frac{1}{\delta} W(u) + \delta |\nabla u|^2\right)\, d\calL^d.
   \end{equation} 
   \item If $s\in (1/2,1)$ then there exists a constant $c_{L,s}>0$ such that for all $u \in W^{1,2}_{\text{per}}(\Pi^d)$ and all $\delta\in(0,1/2)$ there holds   \begin{equation}\label{eq:nonlininterpol_gtr12}
       c_{L,s} |u|_{H^s(\Pi^d)}^2 \leq  \delta^{1-2s}\int_{\Pi^d} \left(\frac{1}{\delta} W(u) + \delta |\nabla u|^2\right)\, d\calL^d.
   \end{equation} 
   \end{enumerate}
\end{proposition}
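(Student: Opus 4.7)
My plan is to work directly with the integral representation \eqref{eq:equivsemi} of the fractional seminorm on the torus and split the $x$-integration into a near-diagonal part $\{|x|<\eta\}$ and its complement, with a threshold $\eta\in(0,\sqrt{d}/2)$ that will ultimately be chosen as a power of $\delta$. Near the diagonal I control the finite differences of $u$ by $|\nabla u|^2$, while on the far part I use the auxiliary function $\phi$ from (H3) (available thanks to Remark \ref{rem:assW}) to replace $|u(x+y)-u(y)|^2$ by a linear expression in $\phi\circ u$, which in turn is controlled by the Modica-Mortola energy. The three regimes of $s$ correspond to different behaviours of the tail integral $\int_\eta^{\sqrt{d}/2}r^{-2s}\,dr$.

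For the near-diagonal piece I lift $u$ to a $1$-periodic function on $\R^d$, write $u(y+x)-u(y)=\int_0^1 \nabla u(y+tx)\cdot x\,dt$, apply Jensen's inequality, and use Fubini together with translation invariance of the periodic Lebesgue measure to obtain
$$\int_{\{|x|<\eta\}\cap[-1/2,1/2)^d}\!\int_{\Pi^d}\frac{|u(y+x)-u(y)|^2}{|x|^{d+2s}}\,dy\,dx\leq C_d\,\eta^{2-2s}\|\nabla u\|_{L^2(\Pi^d)}^2.$$
For the far piece I combine (H3) with the fundamental theorem of calculus along $[y,y+x]$ to obtain
$$|u(y+x)-u(y)|^2\leq c_W'|\phi(u(y+x))-\phi(u(y))|\leq c_W'|x|\int_0^1\sqrt{W(u(y+tx))}\,|\nabla u(y+tx)|\,dt,$$
and then apply the pointwise Young inequality $2\sqrt{W(u)}|\nabla u|\leq \tfrac{1}{\delta}W(u)+\delta|\nabla u|^2$. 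Fubini and periodicity yield
$$\int_{\{|x|\geq\eta\}\cap[-1/2,1/2)^d}\!\int_{\Pi^d}\frac{|u(y+x)-u(y)|^2}{|x|^{d+2s}}\,dy\,dx\leq C\,I_s(\eta)\int_{\Pi^d}\Bigl(\tfrac{1}{\delta}W(u)+\delta|\nabla u|^2\Bigr)\,d\calL^d,$$
where $I_s(\eta):=\int_\eta^{\sqrt{d}/2}r^{-2s}\,dr$ satisfies $I_s(\eta)\leq C$ for $s<1/2$, $I_{1/2}(\eta)\leq C|\ln\eta|$, and $I_s(\eta)\leq C\eta^{1-2s}/(2s-1)$ for $s>1/2$.

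Combining both estimates via the equivalence \eqref{eq:equivsemi} gives
$$|u|_{H^s(\Pi^d)}^2\leq C\bigl(\eta^{2-2s}\delta^{-1}+I_s(\eta)\bigr)\int_{\Pi^d}\Bigl(\tfrac{1}{\delta}W(u)+\delta|\nabla u|^2\Bigr)\,d\calL^d.$$
I then balance the two terms in the prefactor by choosing $\eta=\delta^{1/(2-2s)}$ for $s<1/2$ (both contributions $O(1)$), $\eta=\delta$ for $s=1/2$ (both contributions of order $|\ln\delta|$), and $\eta=\delta$ for $s>1/2$ (both contributions of order $\delta^{1-2s}$), which delivers the three stated bounds. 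The main obstacle is the endpoint $s=1/2$: since $I_{1/2}(\eta)$ is only logarithmically small as $\eta\to 0$, no choice of $\eta$ can avoid a logarithmic loss, which is consistent with the sharpness shown in Remark \ref{rem:h12}. A minor technical point is verifying that, for all $\delta\in(0,1/2)$ and all three regimes, the chosen $\eta$ lies in $(0,\sqrt{d}/2)$ so that the far region is nonempty and the split is meaningful.
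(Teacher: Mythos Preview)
Your argument is correct and shares the paper's overall architecture---use the integral representation \eqref{eq:equivsemi} and split the $x$-integral at a scale comparable to $\delta$---but your treatment of the far region is more streamlined. The paper subdivides $\{|x|\geq\delta\}$ further according to the size of $|u(y+x)-u(y)|$: when this difference is small (or, in principle, very large) it compares both endpoint values to the nearest well and invokes (H2) pointwise, obtaining a contribution $\lesssim\delta^{1-2s}\int\tfrac1\delta W(u)$; only when the difference is of moderate size does it pass to the line-integral/Modica--Mortola bound. By invoking (H3) at the outset you obtain the quadratic-to-linear reduction $|u(y+x)-u(y)|^2\le c_W'|\phi(u(y+x))-\phi(u(y))|$ uniformly, so the extra case distinction disappears and the whole far region is handled in one stroke. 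This is cleaner; the paper's route has the slight advantage that on its $\Omega_2$ piece it uses only the pointwise lower bound $W\gtrsim\min|u\pm1|^2$ and never needs to differentiate $\phi\circ u$ along the segment. One minor remark: for $s<1/2$ you may simply take $\eta=\delta$ as the paper does, since then $\eta^{2-2s}\delta^{-1}=\delta^{1-2s}\le 1$ and $I_s(\delta)\le C_s$; this sidesteps the borderline issue you flag with $\eta=\delta^{1/(2-2s)}$ possibly exceeding $1/2$ when $d=1$ and $s$ is close to $0$.
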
 

\begin{remark}
We note that the scaling in $\delta$ for $s> \frac12$, c.f.~\eqref{eq:nonlininterpol_gtr12}, is the same as in the linear interpolation inequality \eqref{eq: linear interpolation}. We point out, however, that the same cannot be true for $0< s < \frac12$. Indeed, for the function $u_{\delta}$ in Remark \ref{rem:h12}  and $0 < \delta < 1/5$ it holds
\begin{align*}
| u_{\delta} |_{H^{s}}^2 &\geq c_{Fl} \int_0^1 \int_{-1/2}^{1/2} \frac{ |u_{\delta}(x+y) - u_{\delta}(y)|^2}{x^{1+2s}} \, dx dy \geq \int_{1/4}^{(1-\delta)/2} \int_{(1+\delta)/2-y}^{1/2} \frac{4}{|x|^{1+2s}} \, dx dy \\ &\geq \int_{1/4}^{2/5} \int_{1/4+\delta/2}^{1/2} \frac{4}{|x|^{1+2s}} \, dx dy \geq   4 (2/5-1/4) \cdot (1/2 - 1/4 - 1/10) \geq \frac{9}{100}, 
\end{align*}
whereas it holds as in Remark \ref{rem:h12}
\[
\delta^{1-2s} \int_0^1 \left(\frac1{\delta} W(u_{\delta}) + \delta |u_{\delta}'|^2\right) \, d\calL^1 \leq \delta^{1-2s} \left( 2 \max W + 8\right) \longrightarrow 0 \text{ as } \delta \to 0.
\]

\end{remark}

\begin{proof}[Proof of Proposition \ref{prop:interpolation}]
Large parts of the proof agree for all three cases. We therefore consider them together and comment on the differences. 
    Let $u \in W^{1,2}_{per}(\Pi^d) \cap C^1(\Pi^d)$, $s\in (0,1/2]$, and $\delta\in (0,1/2)$.
    By \eqref{eq:equivsemi}, it suffices to estimate 
    \begin{equation}\label{eq:splitinterpol}
        \begin{split}
          & 
            c_{\text{Fl}} |u|_{H^s(\Pi^d)}^2 \leq  \int_{\Pi^d} \int_{[-1/2,1/2)^d} \frac{|u(x+y) - u(y)|^2}{|x|^{d+2s}}\,dx\, dy\\
            &= \int_{\Omega_1}\frac{|u(x+y) - u(y)|^2}{|x|^{d+2s}}\,dx,dy
            +   \int_{\Omega_2}\frac{|u(x+y) - u(y)|^2}{|x|^{d+2s}}\,dx dy+\int_{\Omega_3}\frac{|u(x+y) - u(y)|^2}{|x|^{d+2s}}\,dx dy
        \end{split}
    \end{equation}
    with 
    \begin{eqnarray*}
        \Omega_1&:=&(-\delta,\delta)^d\times\Pi^d\nonumber,\\
        \Omega_2&:=&\left\{(x,y)\in [-\frac{1}{2},\frac{1}{2})^d\times \Pi^d:\ \ |x|\geq \delta\text{\ and\ } |u(x+y)-u(y)|\leq 1/4 \text{ or } |u(x+y)-u(y)| \geq 4 \right\}, \text{\ and}\nonumber\\
        \Omega_3&:=&\left\{(x,y)\in [-\frac{1}{2},\frac{1}{2})^d\times \Pi^d:\ |x|\geq \delta\text{\ and\ }\ 4 > |u(x+y)-u(y)|> 1/4\right\}.
    \end{eqnarray*}
    We consider the three terms in \eqref{eq:splitinterpol} separately. The contribution from $\Omega_1$ is estimated by 
    \begin{equation}\label{eq:interpolI1}
        \begin{split}
          \int_{\Omega_1}\frac{|u(x+y) - u(y)|^2}{|x|^{d+2s}}\,dx dy
             &= \int_{\Pi^d} \int_{(-\delta,\delta)^d}\frac{1}{|x|^{d+2s}}\left(\int_0^1 \nabla u(y+tx) \cdot x \,  dt\right)^2\,dx\,dy\\
            &\leq \int_{\Pi^d} \int_{(-\delta,\delta)^d} |x|^{2-d-2s} \int_0^1 |\nabla u(y+tx)|^2\, dt\, dx\, dy\\
            &\leq  \int_{(-\delta,\delta)^d} |x|^{2-d-2s} \int_0^1 \int_{ tx + \Pi^d} |\nabla u (w)|^2\, dw\, dt\, dx \\
            &\leq C(d)\delta^{2(1-s)} \int_{\Pi^d} |\nabla u|^2\, d\calL^d \leq C(d) \max\{1,\delta^{1-2s}\}\delta \int_{\Pi^d} |\nabla u|^2\, d\calL^d.
        \end{split}
    \end{equation}
   Consider now the contribution from $\Omega_2$. We show first that for all $(x,y) \in \Omega_2$,
   \begin{equation}\label{eq: est diff u}
   |u(x+y)-u(y)|^2\leq 18 \left(\min\{|u(x+y)\pm 1|^2\}+\min\{|u(y)\pm 1|^2\}\right).
   \end{equation}
   Assume first that $|u(x+y) - u(y)| \leq 1/4$. Let $e \in \{\pm 1\}$ be such that $|u(x+y) - e| = \min\{ |u(x+y) \pm 1|\}$. 
   It follows that $|u(y) - e| \leq 2 \min\{ |u(y) \pm 1|\}$. 
   Consequently, 
   \[
   |u(x+y) - u(y)|^2 \leq 2 \left( |u(x+y) - e|^2 + |u(y) - e|^2 \right) \leq 8 \left( \min\{ |u(x+y) \pm 1|^2 \} + \min\{|u(y) \pm 1|^2 \} \right).
   \]
   Now assume that $|u(x+y) - u(y)| \geq 4$. Then $\min\{|u(y) \pm 1|\} \geq 1$ or $\min\{|u(x+y) \pm 1|\} \geq 1$. Without loss of generality, we assume that the latter is true. 
   Let $E \in \{\pm 1\}$ be such that $|u(x) - E | = \min\{|u(x) \pm 1| \}$. Then $|u(x+y) - E| \leq \min\{ |u(x+y) \pm 1| \} + 2 \leq 3 \min\{ |u(x+y) \pm 1| \}$. 
   Hence, 
   \[
   |u(x+y) - u(y)|^2 \leq 2 \left( |u(x+y) - E|^2 + |u(y) - E|^2 \right) \leq 18 \left( \min\{|u(x+y) \pm 1|^2\} + \min\{|u(y) \pm 1|^2\} \right).
   \]
    It follows  from \eqref{eq: est diff u} (with the notation for $(x,y)$ as in \eqref{eq:splitinterpol})
that 
    \begin{equation}\label{eq:interpolI2}
        \begin{split}
         &  \int_{\Omega_2}\frac{|u(x+y) - u(y)|^2}{|x|^{d+2s}}\,dx\,dy
            \leq 18\int_{\Omega_2} \frac{\min\{|u(x+y)\pm 1|^2\} + \min\{|u(y) \pm 1|^2\}}{|x|^{d+2s}}\,dx\,dy\\
            &\leq 36\int_{\{ |x|\geq \delta\}} \int_{\Pi^d}\frac{\min\{|u(w)\pm1|^2\}}{|x|^{d+2s}} \,dw\,dx \leq 
            36 C(d)
            \left(\frac{1}{2s}\left(\delta^{-2s}-1\right)\right)\int_{\Pi^d} \min\{|u(w)\pm1|\}^2 \,dw\\
            &
            \leq \delta^{1-2s} \frac{C(d,s)}{c_W}\frac{1}{\delta}\int_{\Pi^d} W(u)\,d\calL^d\leq C(d,s,W)\max\{1,\delta^{1-2s}\}\int_{\Pi^d} \frac{1}{\delta}W(u)\,d\calL^d.
        \end{split}
    \end{equation}
  Finally, consider the contribution from $\Omega_3$. Note that for all points $(x,y)\in\Omega_3$ we have $1/4\leq |u(x+y) - u(y)|\leq 4$ which implies that $|u(x+y)-u(y)|^2\leq 4|u(x+y)-u(y)|$. Moreover, note that it holds for all $a,b \in \R$ with $|a-b| \geq 1/4$ that 
  \begin{align*}
  \int_a^b \min\{|t \pm 1|\} \, dt \geq \int_{7/8}^{9/8} |t - 1| \, dt = 2 \int_0^{1/8} t \, dt = \frac1{64}.
  \end{align*}
  Consequently,
  \begin{eqnarray}\label{eq:Omega3}
      |u(x+y)-u(y)|&\leq& 4\leq 256 \int_{u(y)}^{u(x+y)}\min\{ | t \pm 1 |\}\,dt \leq \frac{256}{\sqrt{c_W}}  \int_{u(y)}^{u(x+y)}\sqrt{W}\,d\calL^1\nonumber\\
      &\leq& \frac{256}{\sqrt{c_W}} |x| \int_0^{1}\left(\frac{1}{\delta}W(u(y+\tau x))+\delta|\nabla u(y+\tau x)|^2\right)\,d\tau.
  \end{eqnarray}
  We now distinguish the cases $s\neq 1/2$ and $s=1/2$. If $s\neq 1/2$ we obtain with \eqref{eq:Omega3}
\begin{equation}\label{eq:interpolI3}
        \begin{split}
           & \int_{\Omega_3}\frac{|u(x+y) - u(y)|^2}{|x|^{d+2s}}\,dy \,dx\leq 4 \int_{\Omega_3}\frac{|u(x+y)-u(y)|}{|x|^{d+2s}}\,dy\,dx\\
            &\leq \frac{1024}{\sqrt{c_W}}  \int_{\Pi^d}\int_{\{ |x| \in (\delta,\sqrt{d}/2)\}} \frac{|x|}{|x|^{d+2s}}\left(\int_{0}^1\left(\frac{1}{\delta}W(u(y+\tau x))+\delta |\nabla u|^2(y+\tau x)\right) \,d\tau\right)\,dx\,dy\\
            &\leq \frac{C(d)}{ \sqrt{c_W}}  
            \left(\int_{|x|\in (\delta,\sqrt{d}/2)}\frac{1}{|x|^{d+2s-1}}\,dx\ \right)
            \int_{\Pi^d} \left(\frac{1}{\delta}W(u)+\delta |\nabla u|^2\right)\,d\calL^d.
        \end{split}
    \end{equation}
    We use again that for $\delta\leq 1/2$, 
    \[\int_{\sqrt{d}/2\geq |x|\geq \delta}\frac{1}{|x|^{d+2s-1}}\,dx\leq \frac{C(d)}{|1-2s|}\max\{1,\delta^{1-2s}\}, \]
    which is uniformly bounded in $\delta$ if $s\in (0,1/2)$. Hence, 
    inserting \eqref{eq:interpolI1}, \eqref{eq:interpolI2}, and \eqref{eq:interpolI3} into \eqref{eq:splitinterpol} yields the first and third assertions. 
    If $s=1/2$, the assertion follows similarly using $d+2s-1=d$ and 
    \[\int_{\sqrt{d}/2 \geq|x|\geq \delta} \frac{1}{|x|^d}\,dx \leq C(d) |\ln(\delta)|.\]

\end{proof}
We note that following the lines of the proof of Proposition \ref{prop:interpolation}, one can also obtain the nonlinear interpolation inequalities in the non-periodic setting stated in Theorem \ref{th:interpol}.

\subsection{A nonlocal interpolation inequality}

It turns out that the estimates obtained in Section \ref{sec:nonlininterpol1} are sharp but not enough to prove the lower bound for the energy functional \eqref{eq:functional}. We note that for $s=1/2$, the estimates from the previous section would give us that for arbitrary $\kappa,\sigma>0$, we have for all $u\in W^{1,2}_{\text{per}}$ and $\delta\in(0,1/2)$
\begin{eqnarray*}
\sum_{k\in\Z}\min\left\{\kappa \frac{k^2}{\sigma},\ 1\right\}|\hat{u}_k|^2&\leq& \sum_{k \in \Z} \left(\frac{\kappa}{\sigma}\right)^{1/2}|k| |\hat{u}_k|^2=\left(\frac{\kappa}{\sigma}\right)^{1/2}|u|_{H^{1/2}}^2\\
&\leq & \left(\frac{\kappa}{\sigma}\right)^{1/2}|\ln\delta|\int_0^1\left(\frac{1}{\delta}W(u)+\delta |u'|^2\right)\,d\calL^1.
\end{eqnarray*}
In this section we prove Proposition \ref{prop:basicinterpolwithoutdelta} which, as explained in Section \ref{sec:mainresult}, improves the above estimate to avoid the logarithmic term in $\delta$ under mild assumptions on $W$, c.f.~Assumption \ref{ass:W}.
Some of the techniques in the proof are inspired by the works \cite{ChoksiKohnOtto99,Choksi01}, where the authors bound $\| u \|_{L^2}^2$ by $\int_{\Omega}\left( \frac{1}{\delta} W(u) + \delta |\nabla u|^2\right) \, d\calL^d$ with a small prefactor and a nonlocal term which corresponds roughly to a negative Sobolev norm.

\begin{proof}[Proof of Proposition \ref{prop:basicinterpolwithoutdelta}]
Let $L\in\N$ and $M>0$. By density it suffices to consider $u \in W^{1,2}_{\text{per}}(\Pi^d) \cap C^1(\Pi^d)$. We will use a continuous, piecewise affine approximation of $u$. 
For that, we decompose the cube $\Pi^d$ into $L^d$ cubes of side length $L^{-1}$, and consider an associated regular triangulation of the cube $\Pi^d$ into $d! L^d$ regular simplices (see Figure \ref{fig:triangulation}). 
\begin{figure}[h]
\includegraphics[width=3cm]{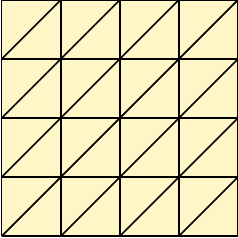}
\caption{Example of a regular triangulation of $\Pi^2$.}
\label{fig:triangulation}
\end{figure}
Note that for each simplex $T$ of the triangulation it holds that 
\begin{equation*}
	\begin{split}
		\diam(T) = \frac{\sqrt{d}}{L} , \quad \calL^d(T) = \frac{1}{d!L^d}.  
	\end{split}
\end{equation*}
By definition, each $T_i$ contains $d+1$ vertices, more precisely, $T_i$ is the convex hull of $(d+1)$ vertices. Note that there are in total $(L+1)^d$ vertices in the triangulation, and we denote the (finite) set of vertices $\calV$, and the set of simplices by $\calT$. We define a periodic piecewise affine approximation $\u^{(L)}$ of $u$ based on this triangulation, inspired by a Clement-type approximation (see e.g. \cite{carstensen06} and the references therein). For this, it suffices to define the function values at each vertex. For each vertex $v\in [0,1)^d$ of the triangulation, let  $T_{v}$ be an arbitrary simplex of the triangulation that contains $v$, and let $u^{(L)}(v)$ be the average of $u$ on the simplex $T_{v}$, i.e., 
\begin{equation*}
	u^{(L)}(v) := \intbar_{T_{v}} u\,d\calL^d.
\end{equation*} 
If one of the coordinates of $v$ is equal to one, the value $u^{(L)}(v)$ is determined by $\Z^d$-periodicity of $u^{(L)}$. 
 Using barycentric coordinates on $T\in\calT$, we can represent every $x\in T$ as convex combination $x=\sum_{j=1}^{d+1}\theta_j^{(T)}(x)v_j^{(T)}$ with $\theta_j^{(T)}(x) \in[0,1]$, $\sum_{j=1}^{d+1}\theta_j^{(T)}(x)=1$, and $T=\text{conv}\{v_j^{(T)}:j=1,\dots, d+1\}$ with $v_j^{(T)}\in\calV$. Then
\begin{eqnarray}\label{eq:estderivbary}
u^{(L)}(x):=\sum_{j=1}^{d+1}\theta_j^{(T)}(x)\,u_L(v_j^{(T)}).
\end{eqnarray}
We denote the Fourier coefficients of $u^{(L)}$ by $\hat{u}^{(L)}_k$, $k\in\Z^d$, and observe
\begin{eqnarray}\label{eq:pwaffdecomp}
&&\sum_{k\in\Z^d}\min\{1,\frac{|k|^2}{M^2}\}|\hat{u}_k|^2\leq2\sum_{k\in\Z^d}\min\{1,\frac{|k|^2}{M^2}\}\left(|\hat{u}_k-\hat{u}^{(L)}_k|^2+|\hat{u}_k|^2\right)\nonumber\\
&\leq& 2\left(\sum_{k\in\Z^d}|\hat{u}_k-\hat{u}^{(L)}_k|^2+\sum_{k\in\Z^d}\frac{|k|^2}{M^2}|\hat{u}_k^{(L)}|^2\right)=2\left(\left\|u-u^{(L)}\right\|_{L^2(\Pi^d)}^2+\frac{1}{M^2}\left\|\nabla u^{(L)}\right\|_{L^2(\Pi^d)}^2\right).
\end{eqnarray}
We estimate the two terms on the right-hand side of \eqref{eq:pwaffdecomp} separately. First, using \eqref{eq:estderivbary} and Jensen's inequality, we obtain
\begin{eqnarray}\label{eq:nonlocinter1}
\left\|u-u^{(L)}\right\|_{L^2(\Pi^d)}^2&=& \sum_{T\in\calT}\int_{T}\left|u-u^{(L)}\right|^2\,d\calL^d\leq \sum_{T\in\calT}\int_{T}\left|\sum_{j=1}^{d+1}\theta_j^{(T)}(x)\left(u(x)-u^{(L)}(v_j^{(T)})\right)\right|^2\,dx\nonumber\\
&=& \sum_{T\in\calT}\int_{T}\left|\sum_{j=1}^{d+1}\theta_j^{(T)}(x)\intbar_{T_{v_j^{(T)}}}\left(u(x)-u(y)\right)\,dy\right|^2\,dx\nonumber\\
&\leq& (d+1)\sum_{T\in\calT}\int_{T}\sum_{j=1}^{d+1}\intbar_{T_{v_j^{(T)}}}\left(u(x)-u(y)\right)^2\,dy\,dx.
\end{eqnarray}
We now use Assumption \ref{ass:W}, and let $\phi$ be as in (H3). We set $\psi:=\phi\circ u$. For vectors $a,b\in\R^d$, we denote by $[a,b]$ the line connecting $a$ and $b$. Then
\begin{eqnarray}\label{eq:estnonlocdiff}
&&(d+1)\sum_{T\in\calT} \sum_{j=1}^{d+1}\int_{T}\intbar_{T_{v_j^{(T)}}}\left(u(x)-u(y)\right)^2\,dy\,dx \leq (d+1)c_W'\sum_{v\in\calV}\int_{T_v}\intbar_{T_{v_j}}\left|\psi(x)-\psi(y)\right|\,dy\,dx\nonumber\\
&\leq&(d+1)c_W'\sum_{T\in\calT} \sum_{j=1}^{d+1}\int_{T}\intbar_{T_{v_j^{(T)}}}\left(\left|\psi(x)-\psi(v_j^{(T)})\right|+\left|\psi(v_j^{(T)})-\psi(y)\right|\right)\,dy\,dx\nonumber\\
&\leq& (d+1)c_W'\sum_{T\in\calT} \sum_{j=1}^{d+1}\int_{T}\intbar_{T_{v_j^{(T)}}}\left(\int_{[x,v_j^{(T)}]}|\nabla \psi(z)|\,dz+\int_{[v_j^{(T)},y]}|\nabla \psi(z)|\,dz\right)\,dy\,dx\nonumber\\
&\leq& (d+1)c_W'\sum_{T\in\calT} \sum_{j=1}^{d+1}\diam(T)\left(\int_{T}|\nabla \psi(z)|\,dz+\int_{T_{v_j^{(T)}}}|\nabla \psi(z)|\,dz\right),
\end{eqnarray}
where we used that all simplices $T\in\calT$ have the same diameter. Hence, combining \eqref{eq:nonlocinter1} and \eqref{eq:estnonlocdiff}, we obtain
\begin{eqnarray}\label{eq:nonloc1}
\left\|u-u^{(L)}\right\|_{L^2(\Pi^d)}^2&\leq& (d+1)c_W' \frac{d^{1/2}}{L}\sum_{T\in\calT}\left(\int_{T}|\nabla \psi(z)|\,dz+\int_{T_{v_j^{(T)}}}|\nabla \psi(z)|\,dz\right)\nonumber\\
&\leq& 2(d+1)^2c_W' \frac{d^{1/2}}{L}\int_{\Pi^d}|\nabla \psi(z)|\,dz.
\end{eqnarray}
We now turn to the second term of the right-hand side of \eqref{eq:pwaffdecomp}. We observe  that for all $T= \text{conv}\{v_j^{(T)}:j=1,\dots, d+1\}\in\calT$ and all $x\in \text{int}(T)$, there holds
\[|\nabla u_L(x)|\leq \sum_{j,k=1}^{d+1}\frac{|u_L(v_k^{(T)})-u_L(v_j^{(T)})|}{|v_k^{(T)}-v_j^{(T)}|}\leq L\sum_{j,k=1}^{d+1}|u_L(v_k^{(T)})-u_L(v_j^{(T)})|.\]
Furthermore, for any $T,T'\in\calT$ there is an isometry $R_{T,T'}:T\to T'$, and hence
\begin{eqnarray}\label{eq:nonloc21}
&&\left\|\nabla u^{(L)}\right\|_{L^2(\Pi^d)}^2=\sum_{T\in\calT}\int_{T} \left|\nabla u^{(L)}\right|^2\,d\calL^d\leq L^2\sum_{T\in\calT} \int_{T}\left(\sum_{j,k=1}^{d+1}|u_L(v_k^{(T)})-u_L(v_j^{(T)})|\right)^2\,d\calL^d\nonumber\\
&=&(d+1)L^2\sum_{T\in\calT}\sum_{j,k=1}^{d+1}\left(\intbar_{T_{v_j^{(T)}}}\left(u-u\circ R_{T_{v_j^{(T)}},T_{v_k^{(T)}})}\right)\,d\calL^d\right)^2\nonumber\\
&\leq& (d+1)L^2\sum_{T\in\calT}\sum_{j,k=1}^{d+1}\intbar_{T_{v_j^{(T)}}}\left(u-u\circ R_{T_{v_j^{(T)}},T_{v_k^{(T)}})}\right)^2\,d\calL^d\nonumber\\
&\leq&(d+1)c_W'L^2\sum_{T\in\calT}\sum_{j,k=1}^{d+1}\intbar_{T_{v_j^{(T)}}}\left|\psi-\psi\circ R_{T_{v_j^{(T)}},T_{v_k^{(T)}})}\right|\,d\calL^d\nonumber\\
&\leq& 2(d+1)^3c_W'L^2\sum_{T\in\calT} 4\diam(T)\int_{\Pi^d}|\nabla \psi|\,d\calL^d,
\end{eqnarray}
where the last step follows similarly to \eqref{eq:estnonlocdiff}.
Therefore, combining \eqref{eq:pwaffdecomp}, \eqref{eq:nonloc1} and \eqref{eq:nonloc21}, we conclude
\begin{eqnarray*}
\sum_{k\in\Z^d}\min\{1,|k|^2/M^2\}|\hat{u}_k|^2&\leq& c\left(\frac{1}{L}+\frac{L}{M^2}\right)\int_{\Pi^d}|\nabla \psi|\,d\calL^d=c\left(\frac{1}{L}+\frac{L}{M^2}\right)\int_{\Pi^d}\sqrt{W(u)}|\nabla u|\,d\calL^d\\&\leq& c\left(\frac{1}{L}+\frac{L}{M^2}\right)\int_{\Pi^d}\left(\frac{1}{\delta}W(u)+\delta |\nabla u|^2\right)\,d\calL^d.
\end{eqnarray*}
\end{proof}
We state the following consequence of Proposition \ref{prop:basicinterpolwithoutdelta}, which will be used in the proof of the lower bound.
\begin{corol}\label{cor:lbinterpol}
There is a constant $c_{int}>0$ such that for all $b,\sigma,\kappa,\Lambda>0$ with $\sigma\geq \kappa$ and all $(u,h)\in W^{1,2}_{\text{per}}\times W^{2,2}_{\text{per}}$ and all $\delta>0$ there holds
\begin{eqnarray*}
\int_0^1\left(\frac{\kappa}{2}|h''|^2+\frac{\sigma}{2}|h'|^2+\Lambda uh''\right)\,d\calL^1\geq -c_{int} \frac{\Lambda^2}{(\kappa\sigma)^{1/2}}\int_0^1\left(\frac{1}{\delta}W(u)+\delta|u'|^2\right)\,d\calL^1.
\end{eqnarray*}
\end{corol}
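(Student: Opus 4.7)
The plan is to eliminate $h$ by optimizing in Fourier space, then to apply Proposition~\ref{prop:basicinterpolwithoutdelta} to control the resulting nonlocal functional in $u$ by the Modica--Mortola-type energy.

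First, I expand $h$ and $u$ in Fourier series on $(0,1)$. Using Parseval, the quadratic terms $\tfrac{\kappa}{2}\|h''\|_{L^2}^2+\tfrac{\sigma}{2}\|h'\|_{L^2}^2$ become diagonal in the $\hat h_k$, while the coupling $\Lambda\int_0^1 u h''\,d\calL^1$ becomes a bilinear pairing in $\hat u_k,\hat h_k$. The $k=0$ mode drops out since only derivatives of $h$ appear. Pairing $\pm k$ and choosing the phase of $\hat h_k$ to align with that of $\hat u_k$, each mode's contribution becomes a real quadratic in $|\hat h_k|$ that can be minimized pointwise. This gives exactly the formula \eqref{eq: nonlocal u} referenced in the introduction, i.e.
\[
\int_0^1\!\!\left(\tfrac{\kappa}{2}|h''|^2+\tfrac{\sigma}{2}|h'|^2+\Lambda uh''\right)d\calL^1 \;\geq\; -\frac{\Lambda^2}{2\kappa}\sum_{k\in\Z}\frac{\kappa k^2}{\sigma+\kappa k^2}|\hat u_k|^2.
\]

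Second, I apply the elementary bound $\frac{x}{1+x}\leq \min\{1,x\}$ for $x\geq 0$ with $x=\kappa k^2/\sigma$. Setting $M:=(\sigma/\kappa)^{1/2}$, this yields $\frac{\kappa k^2}{\sigma+\kappa k^2}\leq \min\{1,k^2/M^2\}$, so the nonlocal tail is dominated by the quantity estimated in Proposition~\ref{prop:basicinterpolwithoutdelta}. That proposition (applied in $d=1$, where the triangulation reduces to a uniform grid on $(0,1)$ and the piecewise-affine reconstruction is a standard Cl\'ement interpolant) gives, for every $L\in\N$ and every $\delta>0$,
\[
\sum_{k\in\Z}\min\!\{1,k^2/M^2\}|\hat u_k|^2 \;\leq\; c\!\left(\tfrac{1}{L}+\tfrac{L}{M^2}\right)\!\int_0^1\!\!\left(\tfrac{1}{\delta}W(u)+\delta|u'|^2\right)d\calL^1.
\]

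Third, I optimize in $L$. Since $\sigma\geq\kappa$ gives $M\geq 1$, the choice $L:=\lceil M\rceil\in\N$ satisfies $M\leq L\leq 2M$, so $\tfrac{1}{L}+\tfrac{L}{M^2}\leq 3/M=3(\kappa/\sigma)^{1/2}$. Combining the three steps yields
\[
\int_0^1\!\!\left(\tfrac{\kappa}{2}|h''|^2+\tfrac{\sigma}{2}|h'|^2+\Lambda uh''\right)d\calL^1 \;\geq\; -\tfrac{3c}{2}\,\tfrac{\Lambda^2}{(\kappa\sigma)^{1/2}}\int_0^1\!\!\left(\tfrac{1}{\delta}W(u)+\delta|u'|^2\right)d\calL^1,
\]
which is the claim with $c_{int}=3c/2$. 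The two points requiring care are (i) that the $\hat h_k$-optimization respects the reality constraint $\hat h_{-k}=\overline{\hat h_k}$ (automatic after pairing $\pm k$), and (ii) the hypothesis $\sigma\geq\kappa$, which is precisely what ensures $M\geq 1$ so that rounding $L$ up to an integer does not spoil the balance $L\sim M$ between the two competing terms $1/L$ and $L/M^2$ in Proposition~\ref{prop:basicinterpolwithoutdelta}; this is the only delicate balance in the argument.
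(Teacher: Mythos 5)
Your proposal is correct and follows essentially the same route as the paper: Fourier expansion and pointwise minimization in $\hat h_k$ to reduce to the nonlocal term $-\frac{\Lambda^2}{2\kappa}\sum_k\min\{1,\tfrac{\kappa}{\sigma}k^2\}|\hat u_k|^2$, then Proposition \ref{prop:basicinterpolwithoutdelta} with $M=(\sigma/\kappa)^{1/2}$ and $L\sim M$ (the paper takes $L=\lfloor M\rfloor$, you take $\lceil M\rceil$; both give $\tfrac1L+\tfrac{L}{M^2}\lesssim M^{-1}$ since $\sigma\geq\kappa$ ensures $M\geq1$). Your remarks on the reality constraint and the $d=1$ specialization of the triangulation are fine and do not change the argument.
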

\begin{proof}
Let $(u,h)\in W^{1,2}_{\text{per}}\times W^{2,2}_{\text{per}}$, and denote by $\hat{u}_k$ and $\hat{h}_k$ the Fourier coefficients of $u$ and $h$, respectively. Then, optimizing in $\hat{h}_k$, we obtain 
\begin{eqnarray*}
\int_0^1\left(\frac{\kappa}{2}|h''|^2+\frac{\sigma}{2}|h'|^2+\Lambda uh''\right)\,d\calL^1&=&\sum_{k\in\Z}\left(\left(\frac{\kappa}{2}k^4+\frac{\sigma}{2}k^2\right)|\hat{h}_k|^2+\Lambda k^2 \hat{u}_k\hat{h}_k\right)\\&\geq&-\sum_{k\in\Z}\frac{\Lambda^2 k^2}{2(\kappa k^2+\sigma)}|\hat{u}_k|^2
\geq-\frac{\Lambda^2}{2\kappa}\sum_{k\in\Z}\min\{1,\,\frac{\kappa}{\sigma}k^2 \}|\hat{u}_k|^2.
\end{eqnarray*}
We now apply Proposition \ref{prop:basicinterpolwithoutdelta} with $M:=(\sigma/\kappa)^{1/2} \geq 1$ and $L:=\lfloor M\rfloor$ to conclude
\begin{eqnarray*}
\int_0^1\left(\frac{\kappa}{2}|h''|^2+\frac{\sigma}{2}|h'|^2+\Lambda uh''\right)\,d\calL^1\geq -c_{int}\frac{\Lambda^2}{(\kappa\sigma)^{1/2}}\int_0^1\left(\frac{1}{\delta}W(u)+\delta|u'|^2\right)\,d\calL^1.
\end{eqnarray*}
\end{proof}

\section{Proof of the scaling law}\label{sec:scaling}
In this section, we prove the scaling law stated in Theorem \ref{maintheo}. The structure is as follows:
We prove the upper and lower bounds separately in Subsections \ref{sec:ub} and \ref{sec:lb}, respectively. Precisely, the upper bound in \eqref{eq:sc1} is proven in Proposition \ref{prop:ub}, the lower bound follows directly from  Proposition \ref{prop:lbyoung}. Furthermore, we show in Proposition \ref{prop:ub1} that there is a constant $C>0$ (depending on $W$) such that for all parameters $\Lambda, \kappa, \sigma, b > 0$, we have admissible test functions $(u,h)$ with 
$ \mathcal{F}(u,h) \leq C \min\{b^{1/2},1\}$, 
which in particular implies the upper bound in \eqref{eq:sc2}. The lower bound in \eqref{eq:sc2} is proven in Proposition \ref{prop:lb2}.

\subsection{Upper bound}\label{sec:ub}
We start with the upper bounds in Theorem \ref{maintheo}. 
\begin{proposition}
\label{prop:ub1}
For all parameters  $\Lambda, \kappa,\sigma, b>0$ and all $W$ satisfying $(H1)$ there exists $(u,h)\in\calA$ such that
\begin{eqnarray*}
    \mathcal{F}(u,h)\leq \left(8+ \max_{[-1,1]}W\right) \min\{1,\,b^{1/2}\}.
\end{eqnarray*}
\end{proposition}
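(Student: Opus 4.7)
The plan is to exhibit explicit test pairs in two regimes, always taking the flat height profile $h \equiv 0$. Since $h \equiv 0$ lies in $W^{2,2}_{\text{per}}$ with zero mean, the terms involving $h'$, $h''$, and the coupling $\Lambda u h''$ all vanish, so $\mathcal{F}(u,0) = \int_0^1 W(u)\,dx + \frac{b}{2}\int_0^1 |u'|^2\,dx$ only depends on $u$ and $b$. This immediately decouples the problem from $\Lambda$, $\kappa$, and $\sigma$, which do not appear in the claimed bound.

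If $b \geq 1$, I would simply take $u \equiv 0$, which is clearly in $W^{1,2}_{\text{per,vol}}$. Then $\mathcal{F}(0,0) = W(0) \leq \max_{[-1,1]} W \leq (8+\max W)\min\{1,b^{1/2}\}$, since $\min\{1,b^{1/2}\}=1$ in this regime.

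If $b < 1$, following the picture in Figure \ref{fig: construction1}, I would construct a continuous, $1$-periodic, piecewise-affine $u$ with two transition layers of width $\delta := b^{1/2}/2$ (note $\delta < 1/2$) centered at $x=1/4$ and $x=3/4$. Concretely, set $u \equiv -1$ on $(0, 1/4-\delta/2) \cup (3/4+\delta/2, 1)$, $u \equiv +1$ on $(1/4+\delta/2, 3/4-\delta/2)$, and interpolate linearly between $\pm 1$ on each of the two transition intervals of length $\delta$. By construction $u$ is $1$-periodic, takes values in $[-1,1]$, and the symmetry ensures $\int_0^1 u \, d\calL^1 = 0$, so $u \in W^{1,2}_{\text{per,vol}}$. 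Since $W(\pm 1)=0$, only the two transitions contribute to $\int W(u)$, yielding $\int_0^1 W(u)\,dx \leq 2\delta \max W = b^{1/2}\max W$. On each transition $|u'|=2/\delta$, so $\frac{b}{2}\int_0^1 |u'|^2\,dx = \frac{b}{2}\cdot 2\delta \cdot (2/\delta)^2 = 4b/\delta = 8\, b^{1/2}$. Adding these contributions gives $\mathcal{F}(u,0) \leq (8+\max W)\, b^{1/2} = (8+\max W)\min\{1,b^{1/2}\}$.

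The argument is essentially a direct computation, with no serious obstacle to overcome. The only point of care is the choice $\delta = b^{1/2}/2$, precisely tuned so that the $W$-term and the gradient term sum to the constant $(8+\max W)$ stated in the proposition; any other scaling $\delta \sim b^{1/2}$ would give the correct scaling law but a different multiplicative constant. The verification of admissibility (periodicity, mean zero, $L^\infty$-bound) and the elementary integrations on the transition layers are routine.
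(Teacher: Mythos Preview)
Your proof is correct and follows essentially the same approach as the paper: in both cases one takes $h\equiv 0$, uses $u\equiv 0$ for $b\geq 1$, and for $b<1$ a piecewise-affine $u$ with two transition layers of total length $b^{1/2}$ and slope $4/b^{1/2}$. Your construction differs from the paper's only by a translation (transitions centered at $1/4$ and $3/4$ rather than starting at $0$ and $1/2$), but the computations and resulting constant are identical.
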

\begin{proof}
If $b\geq 1$, we use $u=h=0$, and obtain
\begin{equation}\label{eq:ub1}
    \inf_\calA\mathcal{F} \leq \mathcal{F}(0,0) = W(0).
\end{equation}
If $b<1$, we set  $h \equiv 0$ and $u$ as (see Figure \ref{fig: construction1})
\begin{equation*}
    u(x) := \begin{cases} 
    \frac{4}{b^{1/2}}x - 1,& \text{ if } x\in \left(0, \frac{b^{1/2}}{2}\right);\\
    1,& \text{ if } x \in \left[\frac{b^{1/2}}{2} , \frac{1}{2}\right);\\
    -\frac{4}{b^{1/2}}x + 1 + \frac{2}{b^{1/2}},& \text{ if } x\in \left[\frac{1}{2}, \frac{1}{2} + \frac{b^{1/2}}{2} \right)\\
    -1,& \text{ if } x \in \left[\frac{1}{2} + \frac{b^{1/2}}{2},1\right).
    \end{cases}
\end{equation*}
Then $(u,h)\in\calA$ and 
\begin{equation}
\begin{split}\label{eq:ub2}
    \mathcal{F}(u,h) &= \int_0^1\left( W(u)+ \frac{b}{2}|u'|^2\right)  \,d\calL^1\leq b^{1/2} \max_{[-1,1]}W + b \int_0^{b^{1/2}/2}  \left(\frac{4}{b^{1/2}}\right)^2 \,d\calL^1\\&= b^{1/2} \max_{[-1,1]}W + 8b^{1/2} = (8+\max_{[-1,1]}W) b^{1/2}.
\end{split}
\end{equation}
Combining \eqref{eq:ub1} and \eqref{eq:ub2} shows the assertion. 
\end{proof}
To obtain the upper bound $\sim-\Lambda^2/\kappa$, we use more complex structures. We proceed in two steps, and first present the construction with two parameters which subsequently are chosen in various parameter regimes.
\begin{proposition}
    \label{lem:ub}
For every $n\in\N$, every $\eps\in(0,1]$, all $W$ satisfying $(H1)$ and all parameters  $\Lambda, \kappa,\sigma, b>0$, there exists an admissible pair $(u_{n,\eps},h_n) \in \cA$ such that
\begin{eqnarray}
    \label{eq:ubansatz1}
    \mathcal{F}(u_{n,\eps},h_n)\leq \max_{[-1,1]}W\,\varepsilon + \frac{8bn^2}{\varepsilon} - \frac{24\Lambda^2 (1-\eps/2)^2 n^2}{\sigma + 48 \kappa n^2}.
\end{eqnarray}
\end{proposition}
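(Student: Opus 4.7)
The plan is to use the ansatz sketched in Figure \ref{fig: construction2}, keeping one free parameter $A>0$ which is optimized at the end. Concretely, I would take $h_n \in W^{2,2}_{\text{per}}$ to have piecewise constant second derivative $h_n'' = (-1)^{j+1} A$ on the intervals $I_j := [(j-1)/(2n), j/(2n))$, $j = 1, \ldots, 2n$, with integration constants chosen so that $h_n'$ (a sawtooth) and $h_n$ both have mean zero. For the order parameter I would set $u_{n,\eps}(x) = -\operatorname{sign}(h_n''(x)) = (-1)^j$ on the ``bulk'' of $I_j$, i.e., on $I_j$ minus neighborhoods of length $\eps/(4n)$ of its endpoints, and interpolate linearly across each transition layer of length $\eps/(2n)$ centered at the jumps $x = j/(2n)$ of $h_n''$. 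The condition $\eps \leq 1$ ensures that the transitions do not overlap; by construction $u_{n,\eps} \in [-1,1]$ and $\int_0^1 u_{n,\eps}\,d\calL^1 = 0$ by symmetry, so $(u_{n,\eps}, h_n) \in \calA$.

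\textbf{Estimation of the five terms.} With the construction fixed, each summand of $\mathcal{F}(u_{n,\eps}, h_n)$ reduces to an elementary calculation. Since $u_{n,\eps} \in \{\pm 1\}$ outside the transitions and $|u_{n,\eps}'| = 4n/\eps$ on the transitions (of total length $\eps$), one obtains $\int_0^1 W(u_{n,\eps})\,d\calL^1 \leq \eps\,\max_{[-1,1]} W$ and $\tfrac{b}{2}\int_0^1 |u_{n,\eps}'|^2\,d\calL^1 = 8bn^2/\eps$. For the height profile, a direct computation of the piecewise constant $h_n''$ and the associated sawtooth $h_n'$ yields $\tfrac{\kappa}{2}\int_0^1 |h_n''|^2\,d\calL^1 = \kappa A^2/2$ and $\tfrac{\sigma}{2}\int_0^1 |h_n'|^2\,d\calL^1 = \sigma A^2/(96 n^2)$. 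The only delicate contribution is the coupling term: on each of the $2n$ bulk pieces one has $u_{n,\eps} h_n'' = -A$, giving a total bulk contribution $-A(1-\eps)$, while each of the $2n$ transition layers contributes $-A\eps/(4n)$ because the two halves (where $h_n''$ has opposite signs and $u_{n,\eps}$ passes linearly through $0$) give equal contributions $-A\eps/(8n)$. Summing yields $\int_0^1 u_{n,\eps} h_n''\,d\calL^1 = -A(1-\eps/2)$.

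\textbf{Optimization in $A$ and main obstacle.} Collecting everything yields
\[
\mathcal{F}(u_{n,\eps}, h_n) \leq \eps\,\max_{[-1,1]} W + \frac{8bn^2}{\eps} + \frac{1}{2}\left(\kappa + \frac{\sigma}{48 n^2}\right) A^2 - \Lambda (1-\eps/2) A.
\]
The last two summands form a quadratic in $A$, minimized at $A = \Lambda(1-\eps/2)/(\kappa + \sigma/(48 n^2))$ with minimum value $-\Lambda^2(1-\eps/2)^2/(2(\kappa + \sigma/(48 n^2)))$; multiplying numerator and denominator by $48 n^2$ reproduces the advertised term $-24\Lambda^2(1-\eps/2)^2 n^2/(\sigma + 48 \kappa n^2)$. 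The only step that requires care is the computation of the coupling integral over a transition layer: one must correctly account for the sign flip of $h_n''$ at the midpoint rather than naively exploiting $\int_{\text{transition}} u_{n,\eps}\,d\calL^1 = 0$, which would wipe out precisely the leading-order term $-A\eps/(4n)$ per transition and thereby the negative part of the bound. Once this is done, the remainder is purely algebraic.
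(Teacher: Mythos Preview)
Your proposal is correct and follows essentially the same route as the paper: the same piecewise-quadratic profile for $h_n$ (equivalently, piecewise-constant $h_n''$), the same sawtooth $u_{n,\eps}$ with linear transitions of total length $\eps$, and the same optimization in the remaining free amplitude. The only cosmetic difference is that the paper parametrizes via $\mu$ with $h_n(x)=\mu h(nx)$, so that $|h_n''|=\mu n^2$, whereas you work directly with $A=|h_n''|$; substituting $A=\mu n^2$ into your expressions reproduces the paper's intermediate formulas $\tfrac{\sigma\mu^2 n^2}{96}$, $\tfrac{\kappa\mu^2 n^4}{2}$, and $-\Lambda(1-\eps/2)\mu n^2$ verbatim. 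One small remark: your closing comment that mishandling the transition layers would ``wipe out the negative part of the bound'' overstates the danger---the bulk alone already gives $-A(1-\eps)$, so the transitions only sharpen $(1-\eps)$ to $(1-\eps/2)$---but this does not affect the argument.
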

\begin{remark}
   We note that the simultaneous explicit optimization in $n$ and $\eps$ in \eqref{eq:ubansatz1} is generally not trivial. However, the last term $\sim\frac{24\Lambda^2 n^2}{\sigma + 48 \kappa n^2}$ can exhibit essentially two behaviors. For $n^2 = 1$, one obtains $\sim \frac{\Lambda^2}{\sigma + \kappa}$, whereas for $n^2 \sim \frac{\sigma}{\kappa}$ one obtains for the this term $\sim \frac{\Lambda^2}{\kappa}$. After choosing $n^2$ one can then optimize explicitly in $\varepsilon$.
    We show later that in terms of matching lower bounds these choices lead to the optimal energy up to a multiplicative constant.
\end{remark}
\begin{proof}
Let $n\in\N$ and $\eps\in (0,1]$. We use the construction sketched in Figure \ref{fig: construction2}. Precisely, consider 
\begin{equation*}
    h(x) := \begin{cases}
        \frac{1}{2}x(x-\frac{1}{2}) & \text{ if }x\in(0, 1/2),\\
        -\frac{1}{2}(x-1)(x-\frac{1}{2}) & \text{ if }x\in[1/2, 1),
    \end{cases}
\end{equation*}
and extend it periodically to $\mathbb{R}$. Let $\mu>0$ to be chosen below, and define $h_n(x):= \mu h(nx)$ and $u_{n,\eps}: [0,1] \to \R$ as
\[
u_{n,\eps}(x) = \begin{cases} 1 &\text{ if } x \in \left(\frac{k}{2n}+ \frac{\eps}{4n}, \frac{k+1}{2n} - \frac{\eps}{4n} \right) \text{ for $k$ odd}, \\
-1 &\text{ if } x \in \left(\frac{k}{2n}+ \frac{\eps}{4n}, \frac{k+1}{2n} - \frac{\eps}{4n} \right) \text{ for $k$ even}, \\
\frac{4n}{\eps}x - \frac{2k}{\eps} &\text{ if } x \in \left(\frac{k}{2n} - \frac{\eps}{4n}, \frac{k}{2n} + \frac{\eps}{4n} \right) \text{ for $k$ odd}, \\
-\frac{4n}{\eps}x + \frac{2k}{\eps} &\text{ if } x \in \left(\frac{k}{2n} - \frac{\eps}{4n}, \frac{k}{2n} + \frac{\eps}{4n} \right) \text{ for $k$ even}.
\end{cases}
\]
Then $(h_n,u_n)$ is admissible. Note that  outside the ''transition layers'', we have $u'=0$, and in the transition layers $|u'|=\frac{4n}{\eps}$.  As there are $2n$ such  layers, each of length $\frac{\varepsilon}{2n}$, we have 
\[
\int_0^1\left( W(u_{n,\eps}) + \frac{b}2 |u_{n,\eps}'|^2 \right)\, d\calL^1 \leq \eps \max_{[-1,1]} W + \frac{b}2 \eps \left( \frac{4n}{\eps} \right)^2 = \eps \max_{[-1,1]} W + 8b \frac{n^2}{\eps}
\]
and 
\[
-\int_0^1 h_n'' u_{n,\eps} \, d\calL^1 = (1-\eps) \mu n^2 + 4n \int_0^{\frac{\eps}{4n}} \mu n^2 \frac{4n}{\eps} x \, dx = (1-\eps) \mu n^2 + \mu \frac{\eps n^2}{2}.
\]
Hence, 
we obtain
\begin{align*}
\cF(u_{n,\eps},h_n) &\leq  \eps \max_{[-1,1]} W + 8b \frac{n^2}{\eps} + \frac{\sigma \mu^2 n^2}{96} + \frac{\kappa}2 \mu^2 n^4  - \Lambda (1-\eps/2)\mu n^2.
\end{align*}
We now optimize in $\mu$, i.e., we choose $\mu := \frac{\Lambda\left(1-\eps/2\right)}{\kappa n^2 + (\sigma / 48)}$, and conclude 
\begin{equation*}
    \mathcal{F}(u, h) \leq \max_{[-1,1]}W\varepsilon + \frac{8bn^2}{\varepsilon} - \frac{24\Lambda^2 (1-\eps/2)^2 n^2}{\sigma + 48 \kappa n^2}.
\end{equation*}

\end{proof}

\begin{remark}\label{rem:youngopt}
We note that Proposition \ref{prop:ub} implies that the lower bound obtained in Proposition \ref{prop:lbyoung} is optimal. Precisely, we claim that  for all $\kappa, \sigma, \Lambda > 0$ and $W$ satisfying $(H1)$ there holds
    \begin{equation*}
        \lim_{b\rightarrow 0}\  \inf_{\calA} \mathcal{F} = -\frac{\Lambda^2}{2\kappa}.
    \end{equation*}
    Indeed, this follows from Proposition \ref{prop:ub} for $b\to 0$ by choosing 
    $\eps:=b^{1/2}$ and $n:=\lfloor b^{-1/8}\rfloor$ in Proposition \ref{prop:lb2}.
\end{remark}
We now prove the upper bound in Theorem \ref{maintheo}, part 2. 
\begin{proposition}\label{prop:ub}
    Let $W$ satisfy $(H1)$. Then there exists $C>0$ such that  for all $b,\sigma, \kappa,\Lambda > 0$ satisfying $\Lambda^2 \geq C \max\{ \sigma b, b \kappa, (b \sigma \kappa)^{1/2}, b^{1/2} \kappa  \}$ there exists an admissible pair $(u,h)$ such that 
    \[
\mathcal{F}(u,h) \leq - \frac1{20} \frac{\Lambda^2}{\kappa}.
    \]
\end{proposition}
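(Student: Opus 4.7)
The plan is to apply Proposition \ref{lem:ub}, which produces a two-parameter family of test functions with energy bounded above by
\[
\max_{[-1,1]}W\cdot\eps \;+\; \frac{8bn^2}{\eps} \;-\; \frac{24\Lambda^2(1-\eps/2)^2 n^2}{\sigma + 48\kappa n^2}.
\]
Following the remark after Proposition \ref{lem:ub}, I would tune $n$ to the ratio $\sigma/\kappa$: set $n:=1$ when $\sigma\leq 48\kappa$, and $n:=\lceil(\sigma/(48\kappa))^{1/2}\rceil$ when $\sigma>48\kappa$. In the first case the denominator satisfies $\sigma+48\kappa n^2\leq 96\kappa$; in the second, the estimates $\sigma/(48\kappa)\leq n^2\leq \sigma/(6\kappa)$ give $\sigma+48\kappa n^2\in[2\sigma,5\sigma]$. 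In both cases the "gain" (the third term) is therefore at least $c_0\,\Lambda^2(1-\eps/2)^2/\kappa$ for some universal $c_0>0$.

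It then remains to choose $\eps\in(0,1]$ so that the "cost" $\max_{[-1,1]}W\cdot\eps+8bn^2/\eps$ is dominated by a definite fraction of the gain. Two natural choices compete: the balanced value $\eps^{\star}:=(8bn^2/\max_{[-1,1]}W)^{1/2}$, which equates the two cost terms and gives cost $2(8bn^2\max_{[-1,1]}W)^{1/2}$; and the fixed value $\eps=1/2$, which gives cost $\tfrac12\max_{[-1,1]}W+16bn^2$. I would use $\eps^{\star}$ whenever $\eps^{\star}\leq 1/2$ (which also guarantees $1-\eps/2\geq 3/4$ and hence a clean lower bound on the gain), and fall back to $\eps=1/2$ otherwise. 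Substituting $n^2=1$ in Case A and $n^2\lesssim\sigma/\kappa$ in Case B, the condition "cost $\leq$ a definite fraction of the gain" translates exactly into the four hypotheses $\Lambda^2\geq Cb^{1/2}\kappa$, $\Lambda^2\geq Cb\kappa$, $\Lambda^2\geq C(b\sigma\kappa)^{1/2}$, $\Lambda^2\geq Cb\sigma$, with $C$ depending only on $\max_{[-1,1]}W$; indeed the $\eps^{\star}$-regime produces the geometric-mean thresholds $b^{1/2}\kappa$ and $(b\sigma\kappa)^{1/2}$, while the $\eps=1/2$-regime produces the linear thresholds $b\kappa$ and $b\sigma$.

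The main technical obstacle is purely bookkeeping: to extract the explicit constant $1/20$, one must quantify the gain and the cost in each of the four sub-regimes and verify that the difference still exceeds $\Lambda^2/(20\kappa)$, which amounts to choosing $C$ large enough to absorb all universal constants accumulated along the way. A minor point is the boundary case $\sigma/\kappa\approx 48$, but there both definitions yield $n=1$, so no discontinuity arises.
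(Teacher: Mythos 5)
Your proposal follows essentially the same route as the paper's proof: it invokes Proposition \ref{lem:ub}, picks $n=1$ or $n\sim(\sigma/\kappa)^{1/2}$ according to whether $\sigma\lesssim\kappa$, and then chooses $\eps$ either as the balanced value or as a fixed order-one value, so that the four sub-regimes correspond exactly to the four thresholds $b\kappa$, $b^{1/2}\kappa$, $b\sigma$, $(b\sigma\kappa)^{1/2}$ in the hypothesis. The remaining differences are cosmetic constant choices (the paper uses the cutoff $\sigma\leq\kappa$, takes $\eps=1$ instead of $\eps=1/2$, and its own computation in fact ends with the constant $1/32$ rather than $1/20$), so your closing bookkeeping remark about absorbing universal constants into $C$ is exactly what the paper does as well.
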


\begin{proof}
Write $K := \max_{[-1,1]} W$. Then it follows from Proposition \ref{lem:ub} that for all $n \in \N$ and $\eps \in (0,1]$ there exist  admissible pairs $(u_{n,\eps},h_n) \in \cA$ such that 
\[
\mathcal{F}(u_{n,\eps},h_n) \leq K \eps + \frac{8bn^2}{\eps} - \frac{1}{8} \frac{\Lambda^2 n^2}{\sigma + \kappa n^2}.
\]
Now assume that $\Lambda^2 \geq \bar{C} \max\left\{ b \sigma, b \kappa, (b\sigma \kappa)^{1/2}, b^{1/2} \kappa \right\}$, where $\bar{C} = \max\{ 2048, 256\sqrt{2K} \}$.
In the following we will distinguish several cases in which we choose $\eps$ and $n$ appropriately to obtain the assertion.\\
First, we assume that $\sigma \leq \kappa$. In this case, we set $n = 1$. This means that we obtain for all $\eps \in (0,1]$ an admissible pair $(u_{1,\eps},h_1) \in \cA$ such that
\[
\mathcal{F}(u_{1,\eps},h_1) \leq K \eps + \frac{8b}{\eps} - \frac{1}{8} \frac{\Lambda^2}{\sigma + \kappa} \leq K \eps + \frac{8b}{\eps} - \frac{1}{16} \frac{\Lambda^2}{\kappa}.
\]
If $K \leq 8b$ then set $\eps = 1$ to obtain 
\[
\mathcal{F}(u_{1,1},h_1) \leq K + 8b - \frac{1}{16} \frac{\Lambda^2}{\kappa} \leq 16 b - \frac{1}{16} \frac{\Lambda^2}{\kappa} \leq - \frac{1}{32} \frac {\Lambda^2}{\kappa}
\]
since $\Lambda^2 \geq 512 b \kappa$. 
If on the other hand $8b \leq K$ then set $\eps_\ast = \frac{b^{1/2} 8^{1/2}}{K^{1/2}} $ to obtain  
\[
\mathcal{F}(u_{1,\eps_\ast},h_1) \leq 2 \sqrt{8K}  b^{1/2}  - \frac{1}{16} \frac{\Lambda^2}{\kappa} \leq - \frac{1}{32} \frac{\Lambda^2}{\kappa}
\]
since $\Lambda^2 \geq 256 \sqrt{2K} b^{1/2}  \kappa$. 
Secondly, we assume that $\kappa \leq \sigma$. In this case we set $n_\ast = \lceil \frac{\sigma^{1/2}}{\kappa^{1/2}} \rceil$. In particular, it holds $\frac{\sigma}{\kappa} \leq n_\ast^2 \leq 4 \frac{\sigma}{\kappa}$. Hence, for every $\eps \in (0,1]$ there exists an admissible pair $(u_{n_\ast,\eps},h_{n_\ast}) \in \cA$ satisfying 
\[
\mathcal{F}(u_{n_\ast,\eps},h_{n_\ast}) \leq K \eps + \frac{32 b \sigma}{\eps \kappa} - \frac{1}{16} \frac{\Lambda^2 }{\kappa}. 
\]
If $K \leq \frac{32 b \sigma}{\kappa}$ then we set $\eps = 1$ to obtain 
\[
\mathcal{F}(u_{n_\ast,1},h_{n_\ast}) \leq \frac{64 b \sigma}{\kappa} - \frac{1}{16} \frac{\Lambda^2}{\kappa} \leq - \frac1{32} \frac{\Lambda^2}{\kappa}
\]
since $\Lambda^2 \geq 2048 b \sigma$.
Eventually, if $K \geq \frac{32 b \sigma}{\kappa}$ then set $\eps^\ast =  \sqrt{\frac{32 b \sigma}{K \kappa}} \in (0,1]$ to obtain 
\[
\mathcal{F}(u_{n_\ast,\eps^\ast},h_{n_\ast}) \leq 2 \sqrt{\frac{32K b \sigma}{\kappa}} - \frac1{16} \frac{\Lambda^2}{\kappa} \leq -\frac1{32} \frac{\Lambda^2}{\kappa}
\]
since $\Lambda^2 \geq 256 \sqrt{2K} (b \sigma \kappa)^{1/2}$.
\end{proof}

\subsection{Lower Bound}\label{sec:lb}
We now prove the lower bounds in Theorem \ref{maintheo}. Note that the lower bound in \eqref{eq:sc1} follows directly from Proposition \ref{prop:lbyoung}, and it remains to prove \eqref{eq:sc2}.

\begin{proposition}\label{prop:lb2}
Let $W$ satisfy $(H1)$ and $(H2)$.
Then there exists $c>0$ such that for all $b,\sigma, \kappa,\Lambda > 0$ satisfying $\Lambda^2 \leq c \max\{b\sigma, b \kappa, (b\sigma\kappa)^{1/2},  b^{1/2}\kappa\}$ it holds 
        \begin{equation*}
            c \min\{1, b^{1/2} \} \leq \inf_{\calA} \mathcal{F}.
        \end{equation*}
 \end{proposition}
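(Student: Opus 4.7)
\emph{Proof plan.} The strategy is to reduce the problem, via Fourier optimization in $h$, to a lower bound for a purely nonlocal functional in $u$, and then to show that the resulting coupling term is dominated by (a fraction of) the Modica--Mortola energy $\int_0^1(W(u) + \frac{b}{2}|u'|^2)\,d\calL^1$, which by Proposition \ref{prop:modica-mortola} is bounded below by $c_{MM}\min\{1, b^{1/2}\}$. Expanding $u$ and $h$ in Fourier series and minimizing mode by mode in $\hat h_k$, exactly as in the proof of Corollary \ref{cor:lbinterpol}, yields, for every $(u,h) \in \calA$,
\[
\mathcal{F}(u,h) \;\geq\; \int_0^1\!\left(W(u) + \tfrac{b}{2}|u'|^2\right)d\calL^1 \;-\; \frac{\Lambda^2}{2}\sum_{k\neq 0}\frac{k^2}{\sigma + \kappa k^2}|\hat u_k|^2.
\]

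Writing $M := \max\{b\sigma, b\kappa, (b\sigma\kappa)^{1/2}, b^{1/2}\kappa\}$, I would split into four sub-regimes according to which of these four values realizes $M$ and apply in each a tailored estimate on the multiplier $k^2/(\sigma+\kappa k^2)$. If $M = b\sigma$, use $\frac{k^2}{\sigma+\kappa k^2}\leq\frac{k^2}{\sigma}$ together with the hypothesis $\Lambda^2\leq cb\sigma$ to bound the nonlocal term by $\frac{cb}{2}\|u'\|_{L^2}^2$, absorbed into $(b/2)\|u'\|_{L^2}^2$ for $c$ small. If $M=b^{1/2}\kappa$ (which forces $\sigma\leq\kappa$ and $b\leq 1$), use $\frac{k^2}{\sigma+\kappa k^2}\leq\frac{1}{\kappa}$ together with $|u|\leq 1$ to bound the nonlocal term by $\frac{\Lambda^2}{2\kappa}\leq\frac{cb^{1/2}}{2}$, at most half of $c_{MM}b^{1/2}$. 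If $M=b\kappa$ (forcing $\sigma\leq\kappa$ and $b\geq 1$), the bound $|u|\leq 1$ is too weak; here I would instead combine $\frac{k^2}{\sigma+\kappa k^2}\leq\frac{1}{\kappa}$ with the Poincar\'e inequality $\|u\|_{L^2}^2\leq\|u'\|_{L^2}^2/(4\pi^2)$ valid for zero-mean periodic $u$, bounding the nonlocal term by $\frac{\Lambda^2}{8\pi^2\kappa}\|u'\|_{L^2}^2\leq\frac{cb}{8\pi^2}\|u'\|_{L^2}^2$, again absorbed. Finally, if $M=(b\sigma\kappa)^{1/2}$ (which forces $\sigma\geq\kappa$, so that Corollary \ref{cor:lbinterpol} applies), I would invoke that corollary with $\delta=b^{1/2}/\sqrt{2}$; a direct computation shows that provided $\Lambda^2\leq(b\kappa\sigma)^{1/2}/(2\sqrt{2}c_{int})$, the resulting coefficients $c_{int}\Lambda^2/((\kappa\sigma)^{1/2}\delta)$ and $c_{int}\Lambda^2\delta/(\kappa\sigma)^{1/2}$ are at most $1/2$ and $b/4$ respectively, so that only half of $\int_0^1(W+\frac{b}{2}|u'|^2)\,d\calL^1$ is consumed.

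In sub-regimes (i), (iii), (iv), a positive multiple of $\int_0^1(W(u) + \frac{b}{2}|u'|^2)\,d\calL^1$ (possibly with a reduced gradient weight $b/4$) remains, and Proposition \ref{prop:modica-mortola} yields the claimed lower bound; in sub-regime (ii), the nonlocal term is directly dominated by half the Modica--Mortola lower bound $c_{MM}b^{1/2}$. Setting $c$ to be the minimum of all threshold constants gives $\mathcal{F}(u,h)\geq c\min\{1,b^{1/2}\}$ uniformly in $(u,h)$. I expect the main obstacle to lie in sub-regime (iv), which genuinely requires the nonlocal interpolation of Corollary \ref{cor:lbinterpol} (ultimately Proposition \ref{prop:basicinterpolwithoutdelta}); sub-regime (iii) also needs the Poincar\'e-based refinement that goes beyond the two naive bounds discussed in Section \ref{sec:mainresult}, and a separate delicate point is the bookkeeping needed to check that the four sub-regimes exhaust the full hypothesis on $\Lambda$.
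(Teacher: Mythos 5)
Your proposal is correct and follows essentially the same route as the paper: optimize out $h$ mode by mode, split into the four cases according to which of $b\sigma$, $b\kappa$, $(b\sigma\kappa)^{1/2}$, $b^{1/2}\kappa$ realizes the maximum, and absorb the coupling term using, respectively, the $k^2/\sigma$ bound, Poincar\'e, Corollary \ref{cor:lbinterpol} (your choice $\delta=b^{1/2}/\sqrt{2}$ works just as well as the paper's $\delta=2c_{int}\Lambda^2/(\sigma\kappa)^{1/2}$), and the $L^\infty$ bound, before invoking Proposition \ref{prop:modica-mortola}. The only caveat is bookkeeping of constants (e.g.\ the factor $\sqrt{2}$ from the weight $b/2$ in the Modica--Mortola bound), which is harmless since $c$ may be chosen small.
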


\begin{proof}
 Let $(u, h) \in \calA$.
First, as in the proof of Corollary \ref{cor:lbinterpol} it holds 
\begin{equation}\label{eq: h optimized1}
\int_0^1\left(\frac{\kappa}{2}|h''|^2+\frac{\sigma}{2}|h'|^2+\Lambda uh''\right)\,d\calL^1 \geq -\frac{\Lambda^2}{2\kappa}\sum_{k\in\Z}\min\{1,\,\frac{\kappa}{\sigma}k^2 \}|\hat{u}_k|^2.
\end{equation}
In particular, we obtain that 
\begin{equation} \label{eq: h optimized}
\int_0^1\left(\frac{\kappa}{2}|h''|^2+\frac{\sigma}{2}|h'|^2+\Lambda uh''\right)\,d\calL^1 \geq -\frac{\Lambda^2}{2\kappa} \| u \|_{L^2}^2.  
\end{equation}
Let us now assume that $\Lambda^2 \leq \bar{c} \max\{b\sigma, b \kappa, (b\sigma\kappa)^{1/2},  b^{1/2}\kappa\}$ for $\bar{c} = \min\left\{1/2, 1/(c_{int}\sqrt{8}), c_{MM} / \sqrt{2} \right\}$, where $c_{int}$ and $c_{MM}$ are the constants from Corollary \ref{cor:lbinterpol} and Proposition \ref{prop:modica-mortola}, respectively. We distinguish the following cases.
\begin{enumerate}
\item If $\max\{b\sigma, b \kappa, (b\sigma\kappa)^{1/2},  b^{1/2}\kappa\} = b\sigma$ then we estimate using \eqref{eq: h optimized1} and Proposition \ref{prop:modica-mortola} 
\begin{align*}
\mathcal{F}(u,h) &\geq \int_0^1 \left(W(u) + \frac{b}2 |u'|^2 - \frac{\Lambda^2}{2 \sigma} |u'|^2 \right)\, d\calL^1 \\ &\geq   \int_0^1\left( W(u) + \frac{b}4 |u'|^2\right) \, d\calL^1 \geq \frac{c_{MM}}{2} \min\{1,b^{1/2}\}.
\end{align*}
\item If $\max\{b\sigma, b \kappa, (b\sigma\kappa)^{1/2},  b^{1/2}\kappa\} = b\kappa$ then we estimate using \eqref{eq: h optimized}, Poincar\'e's inequality and Proposition \ref{prop:modica-mortola} 
\begin{align*}
\mathcal{F}(u,h) 
&\geq \int_0^1\left( W(u) + \frac{b}2 |u'|^2 - \frac{\Lambda^2}{2\kappa} |u'|^2 \right)\, d\calL^1 \\
&\geq \int_0^1 \left(W(u) + \frac{b}4 |u'|^2\right) \, d\calL^1 \geq \frac{c_{MM}}2 \min\{1,b^{1/2}\}.
\end{align*}
\item If $\max\{b\sigma, b \kappa, (b\sigma\kappa)^{1/2},  b^{1/2}\kappa\} = (b\sigma \kappa)^{1/2}$ then we estimate using Corollary \ref{cor:lbinterpol} with $\delta = \frac{2 c_{int} \Lambda^2}{ (\sigma \kappa)^{1/2}}$, where $c_{int} > 0$ is the constant from Corollary \ref{cor:lbinterpol},
\begin{align*}
\int_0^1 \left( \frac{\kappa}2 |h''|^2 + \frac{\sigma}2 |h'|^2 + \Lambda u h'' \right) \, d\mathcal{L}^1 &\geq - \int_0^1 \left(\frac12 W(u) + 2c_{int}^2 \frac{\Lambda^4}{\sigma \kappa} |u'|^2\right) \, d\mathcal{L}^1 \\
&\geq -\frac12 \int_0^1 \left( W(u) + \frac{b}2 |u'|^2\right) \, d\mathcal{L}^1.
\end{align*}
Hence, by Proposition \ref{prop:modica-mortola} 
\[
\mathcal{F}(u,h) \geq \frac12 \int_0^1 \left( W(u) + \frac{b}2\right) \, d\mathcal{L}^1 \geq \frac{c_{MM}}{2\sqrt{2}} \min\{1,b^{1/2}\}.
\]
\item If $\max\{b\sigma, b \kappa, (b\sigma\kappa)^{1/2},  b^{1/2}\kappa\} = b^{1/2} \kappa$ then we estimate using Proposition \ref{prop:modica-mortola} and Proposition \ref{prop:lbyoung}
\begin{align*}
\mathcal{F}(u,h) \geq \frac{c_{MM}}{\sqrt{2}} \min\{1,b^{1/2}\} - \frac{\Lambda^2}{2\kappa} \geq \frac{c_{MM}}{2\sqrt{2}} \min\{1,b^{1/2}\}.
\end{align*}
For the last estimate note that in this case it holds $b^{1/2} \kappa \geq b \kappa$ which implies that $\min\{1,b^{1/2} \} = b^{1/2} \geq \bar{c} \frac{\Lambda^2}{\kappa} \geq \frac{c_{MM}}{\sqrt{2}} \frac{\Lambda^2}{\kappa}$.
\end{enumerate}
This concludes the proof of Proposition \ref{prop:lb2}.
\end{proof}

\section{Existence of Minimizers}
\label{sec:existence}
In this section, we discuss the existence of minimizers for $\cF$ depending on the parameters $b,\sigma,\kappa$ and $\Lambda$. 

\begin{proposition}\label{prop:existence}
\begin{enumerate}
    \item Let $b,\kappa > 0$ and $\sigma,  \Lambda \geq 0$. Then 
    \[
    \inf_{\calA} \cF = \min_{\calA} \cF.
    \] 
    \item Let $\kappa = 0$, and $b,\Lambda, \sigma \geq 0$. 
    If $\frac{\Lambda^2}{\sigma} \geq b$ then there are no minimizers and
    \[
    \inf_{\calA} \cF = \begin{cases}
        -\infty &\text{ if } b < \frac{\Lambda^2}{\sigma}, \\
        0 &\text{ if } b = \frac{\Lambda^2}{\sigma}.
    \end{cases}
    \] 
    If $b>\frac{\Lambda^2}{\sigma}$, then there exists a minimizer if and only if there is a minimizer $u$ of
    \[\inf_{u\in W^{1,2}_{\text{per}}}\ \int_0^1\left( W(u)+\left(b-\frac{\Lambda^2}{\sigma}\right)|u'|^2\right)\,d\calL^1 \]
    that satisfies the regularity property $u\in W^{2,2}_{\text{per}}(0,1)$. For any $C^1$-double well potential $W$ satisfying (H1) there is a constant $K>0$ such that this holds if $b>K\frac{\Lambda^2}{\sigma}$.

    \item Let $b=0$ and $\kappa,\sigma,\Lambda \geq 0$. Then 
    \[
    \inf_{\calA} \cF =  -\frac{\Lambda^2}{2 \kappa}
    \] 
    and no minimizers exist if $\Lambda >0$. Here $-\frac{\Lambda^2}{2\kappa}$ has to be understood as $-\infty$ for $\Lambda > 0$, $\kappa =0$ and as $0$ for $\Lambda = 0$, $\kappa =0$. 
\end{enumerate}
\end{proposition}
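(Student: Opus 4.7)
The plan is to treat the three parts of the proposition separately, in increasing order of difficulty.

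For Part 1 ($b, \kappa > 0$) I would apply the direct method of the calculus of variations. Given a minimizing sequence $(u_n, h_n) \subseteq \calA$, a weighted Young inequality of the form $\Lambda|uh''| \leq \Lambda^2 u^2/\kappa + \kappa|h''|^2/4$, combined with $|u_n|\leq 1$, yields
\begin{equation*}
\cF(u_n, h_n) + \frac{\Lambda^2}{\kappa} \geq \int_0^1 \Big(W(u_n) + \tfrac{b}{2}|u_n'|^2 + \tfrac{\sigma}{2}|h_n'|^2 + \tfrac{\kappa}{4}|h_n''|^2\Big)\, d\calL^1,
\end{equation*}
so that $(u_n)$ is bounded in $W^{1,2}$ and, using $\int h_n = 0$ and the Poincar\'e inequality twice, $(h_n)$ is bounded in $W^{2,2}$. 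Extracting a weakly convergent subsequence, weak lower semicontinuity handles the quadratic terms, dominated convergence (continuity of $W$ and $|u_n|\leq 1$) gives $\int W(u_n) \to \int W(u)$, and the coupling term converges by strong-weak pairing since $u_n \to u$ in $L^2$ while $h_n'' \rightharpoonup h''$ in $L^2$.

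For Part 2 ($\kappa = 0$) the main idea is to integrate by parts in the periodic setting, $\int_0^1 uh''\, d\calL^1 = -\int_0^1 u'h'\, d\calL^1$, and complete the square in $h'$ to rewrite
\begin{equation*}
\cF(u,h) = \int_0^1 \Big( W(u) + \tfrac{a}{2}|u'|^2 + \tfrac{\sigma}{2}\big|h' - \tfrac{\Lambda}{\sigma}u'\big|^2\Big)\, d\calL^1, \qquad a := b - \tfrac{\Lambda^2}{\sigma}.
\end{equation*}
For fixed $u \in W^{1,2}_{\text{per,vol}}$ the infimum over admissible $h$ of the last term is zero, attained iff $u' \in W^{1,2}$ (otherwise mollification realizes the infimum in the limit). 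Three subcases arise. If $a < 0$, testing with high-frequency sawtooth functions $u_N$ (with $|u_N|\leq 1$ and $\int |u_N'|^2 \to \infty$) together with smoothed near-optimal $h_N$ drives $\cF \to -\infty$. If $a = 0$, the energy is nonnegative, a standard Modica-Mortola transition-layer construction shows $\inf = 0$, and a hypothetical minimizer would need $W(u) = 0$ a.e., i.e., $u \equiv \pm 1$ a.e., contradicting continuity of $u$ and $\int u = 0$. If $a > 0$, the problem reduces to minimizing the Modica-Mortola-type functional $E(u) := \int_0^1 (W(u) + (a/2)|u'|^2)\, d\calL^1$ over $W^{1,2}_{\text{per,vol}}$, and a minimizer of $\cF$ exists iff a minimizer of $E$ lies in $W^{2,2}_{\text{per}}$ (since only then can one choose $h$ with $h' = \Lambda u'/\sigma$ admissibly). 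Existence of some $W^{1,2}$-minimizer $\bar u$ of $E$ follows by the direct method; for $W \in C^1$, the weak Euler-Lagrange equation $a \bar u'' = W'(\bar u) - \lambda$ (with Lagrange multiplier $\lambda$ enforcing mean zero) upgrades $\bar u$ to $W^{2,\infty} \subseteq W^{2,2}_{\text{per}}$ provided the $L^\infty$-obstacle $|u|\leq 1$ is inactive---this is what the sufficient condition $b > K\Lambda^2/\sigma$ should guarantee.

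For Part 3 ($b = 0$) the lower bound $\inf \cF \geq -\Lambda^2/(2\kappa)$ is Proposition \ref{prop:lbyoung} under the stated conventions at $\kappa = 0$. For the matching upper bound when $\Lambda, \kappa > 0$, I would adapt the construction from Proposition \ref{lem:ub} by fixing $b = 0$: the $\tfrac{b}{2}\int |u_{n,\eps}'|^2$ term disappears so $\eps \to 0$ is free of cost, $n \to \infty$ makes $\tfrac{\sigma}{2}\int |h_n'|^2 = O(1/n^2)$ negligible, and the remaining terms assemble to $-\Lambda^2/(2\kappa)+o(1)$. Nonexistence when $\Lambda > 0$ follows from the equality case of the Young inequality in Proposition \ref{prop:lbyoung}: a minimizer would force $|u|\equiv 1$ and $u = -\kappa h''/\Lambda$ a.e., incompatible with $u\in W^{1,2}$ being continuous with $\int u = 0$. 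The main technical obstacle lies in the $a > 0$ subcase of Part 2: establishing the equivalence between existence of minimizers of $\cF$ and $W^{2,2}$-regularity of minimizers of $E$, and in particular handling the interplay of the $L^\infty$-constraint $|u|\leq 1$, the mean-zero constraint, and the Euler-Lagrange equation---the quantitative condition $b > K\Lambda^2/\sigma$ should be what guarantees the obstacle is inactive so that the regularity argument goes through in its standard form.
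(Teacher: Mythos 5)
Your overall route coincides with the paper's: Part 1 is the same direct-method argument (weighted Young plus Poincar\'e for coercivity, compact embedding $W^{1,2}\hookrightarrow L^2$ so that the coupling term passes to the limit as a strong--weak pairing), and Part 3 is also essentially the paper's proof — the upper bound comes from the oscillatory construction of Proposition \ref{lem:ub} with the $b$-term removed (the paper phrases this as $b\to0$ in Remark \ref{rem:youngopt} plus monotonicity in $b$), and your nonexistence argument via $|u|\equiv 1$ a.e.\ contradicting continuity and mean zero is a valid (and for $\sigma=0$ even slightly cleaner) substitute for the paper's contradiction $|h'|=0$, $|h''|=\Lambda/\kappa$ a.e. In Part 2 your completion of the square is exactly the paper's estimate \eqref{eq: lb kappa 0} together with its equality case $h'=\frac{\Lambda}{\sigma}u'$, and the subcases $a<0$, $a=0$, as well as the reduction of $a>0$ to the question whether some minimizer of the Modica--Mortola functional lies in $W^{2,2}_{\text{per}}$, all match the paper.

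There is, however, a genuine gap at the one point you yourself flag: you never prove that a smallness condition of the type $b>K\frac{\Lambda^2}{\sigma}$ renders the obstacle $|u|\le 1$ inactive; you only assert that it ``should''. This is precisely the step the paper supplies, and it is short. Assume $\frac12\left(b-\frac{\Lambda^2}{\sigma}\right)>W(0)$ (this is the concrete condition behind the constant $K$), and let $u$ minimize $E(u)=\int_0^1\left(W(u)+\frac12\left(b-\frac{\Lambda^2}{\sigma}\right)|u'|^2\right)d\calL^1$ over $W^{1,2}_{\text{per,vol}}$. Comparing with the admissible competitor $u\equiv 0$ gives $\frac12\left(b-\frac{\Lambda^2}{\sigma}\right)\int_0^1|u'|^2\,d\calL^1\le E(u)\le W(0)$, hence $\int_0^1|u'|^2\,d\calL^1<1$. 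Since $\int_0^1 u\,d\calL^1=0$ and $u$ is continuous, there is $x$ with $u(x)=0$, and then for every $y$ one has $|u(y)|=|u(y)-u(x)|\le\left(\int_0^1|u'|^2\,d\calL^1\right)^{1/2}<1$ by Cauchy--Schwarz. So the constraint is strictly inactive, two-sided variations are admissible, $u$ solves $\left(b-\frac{\Lambda^2}{\sigma}\right)u''=W'(u)+\lambda$ weakly with a Lagrange multiplier $\lambda$ for the mean-zero constraint, and since $W'(u)\in L^\infty$ this yields $u\in W^{2,2}_{\text{per}}$, which by your (correct) equivalence produces a minimizer of $\cF$. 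Without this comparison argument (or an equivalent obstacle-regularity argument) the final claim of Part 2 remains unproven; the rest of your proposal is essentially the paper's proof.
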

\begin{proof}
 First, note that by the compact embedding $W^{1,2}((0,1)) \hookrightarrow L^2(0,1)$ it follows for $u_k \rightharpoonup u$ in $W^{1,2}_{per,vol}$ and $h_k \rightharpoonup h$ in $W^{2,2}_{per}$ that
    \[
    \liminf_{k\to \infty} \cF(u_k,h_k) \geq \cF(u,h),
    \]
    i.e.,~$\cF$ is lower semicontinuous with respect to weak convergence. 
    Hence, 1.~follows from the direct method of the Calculus of Variations if $\cF$ is coercive on $\calA$.
    Set $f(s,\xi) \coloneqq\frac{\kappa}2 |\xi|^2 + \Lambda s \xi$. 
    Then for $|s|\leq 1$ there holds 
    \[
    |f(s,\xi)| \geq \frac{\kappa}2 |\xi|^2 - \Lambda |\xi| \geq \frac{\kappa}{4} |\xi|^2 - \frac{\Lambda^2}{\kappa},
    \]
   and hence,
    \[
\cF(u,h) \geq \int_0^1 \left(\frac{b}2 |u'|^2 + \frac{\kappa}4 |h''|^2\right) \, d\calL^1 - \frac{\Lambda^2}{\kappa}. 
    \]
    Since by periodicity of $h$ we have $\int_0^1 h' \, dx =0$, Poincar\'e's inequality yields $\| h' \|_{L^2} \leq \| h'' \|_{L^2}$. Since additionally $\int_0^1 h(x) \, dx =0$, we obtain  $\| h \|_{L^2} \leq \| h''\|_{L^2}$. 
    This means that for all $C >0$ and $(u,h) \in \calA$ such that $\cF(u,h) \leq C$ it holds 
    \[
    \| u \|_{W^{1,2}} \leq 1 + \frac{2C}b + \frac{\Lambda^2}{\kappa b} \quad \text{ and } \quad \| h \|_{W^{2,2}} \leq \frac{6}{\kappa} \left( C + \frac{\Lambda^2}{\kappa} \right). 
    \]
    Hence, for all $C>0$ the set $\{(u,h) \in \calA: \cF(u,h) \leq C \}$ is weakly precompact, i.e., $\cF$ is coercive on $\calA$. This concludes the proof of 1.
\\\noindent
     In order to show 2., let $\kappa = 0$ and $\Lambda,\sigma,b \geq 0$. \
     Note that using integration by parts and Young's inequality we find that 
     \begin{align} \label{eq: lb kappa 0}
         \cF(u,h) &= \int_0^1\left( W(u) + \frac{b}2 |u'|^2 + \frac{\sigma}2 |h'|^2 + \Lambda u h''\right) \, d\calL^1 \nonumber\\
         &= \int_0^1 \left(W(u) + \frac{b}2 |u'|^2 + \frac{\sigma}2 |h'|^2 - \Lambda u' h'\right) \, d\calL^1 \nonumber\\
         &\geq \int_0^1 \left(W(u) + \frac12 \left( b - \frac{\Lambda^2}{\sigma} \right) |u'|^2\right)  \, d\calL^1.
     \end{align}
    In the last inequality equality holds if and only if $h'(x) = \frac{\Lambda}{\sigma} u'(x)$ for almost every $x \in (0,1)$.  
    Now, consider $u(x) = \chi_{(0,1/4) \cup (3/4,1)} - \chi_{(1/4,3/4)}$, where $\chi$ denotes the indicator function taking only values $0$ and $1$,  and $u_{\eps} = u * \rho_{\eps}$ for a symmetric standard mollifier $\rho_{\eps}$.
    Then for $0< \eps < 1/8$ it holds $u_{\eps} \in W^{1,2}_{per,vol}$ with $|u_{\eps}| \leq 1$.
    If $b \leq \frac{\Lambda^2}{\sigma}$ we obtain for $h_{\eps} = \frac{\Lambda}{\sigma} u_{\eps}$ by \eqref{eq: lb kappa 0}
    \begin{equation} \label{eq: limit kappa 0}
    \cF(u_{\eps},h_{\eps}) = \int_0^1 \left(W(u_{\eps}) + \frac12 \left( b - \frac{\Lambda^2}{\sigma} \right) |u'_{\eps}|^2\right) \, d\calL^1 \stackrel{\eps \to 0}{\longrightarrow} \begin{cases} - \infty &\text{ if } b < \frac{\Lambda^2}{\sigma}, \\ 0 &\text{ if } b = \frac{\Lambda^2}{\sigma}.   
    \end{cases}
    \end{equation}
Hence, $\inf \cF = -\infty$ for $b < \frac{\Lambda^2}{\sigma}$ and clearly there are no minimizers. Now, consider $b = \frac{\Lambda^2}{\sigma}$. Note that for $u \in W^{1,2}_{per,vol}$ by the embedding $W^{1,2} \hookrightarrow C^0$ and $\int_0^1 u \, dx = 0$ there exists $x \in (0,1)$ such that $u(x) = 0$. Then the continuity of $u$ implies that $\int_0^1 W(u) \, dx > 0$.  Together with \eqref{eq: lb kappa 0} and \eqref{eq: limit kappa 0} this yields that $\inf \cF = 0$ but there are no minimizers.
Eventually, consider the case $b > \frac{\Lambda^2}{\sigma}$. Then 
\eqref{eq: lb kappa 0} shows that 
\[
\inf_{\calA} \cF = \inf_{u \in W^{1,2}_{\text{per,vol}}} \int_0^1\left( W(u) + \frac12 \left( b - \frac{\Lambda^2}{\sigma} \right) |u'|^2\right) \, d\calL^1.
\]
By the direct method of the Calculus of Variations it is straight forward to show existence of minimizers $u^* \in W^{1,2}_{per,vol}$ for the right hand side. Recall that for any minimizer $u^* \in W^{1,2}_{per,vol}$  of the right hand side above there exists $h \in W^{2,2}_{per}$ such that 
\[
\cF(u^*,h) = \int_0^1\left( W(u) + \frac12 \left( b - \frac{\Lambda^2}{\sigma} \right) |u'|^2 \right)\, d\calL^1
\]
if and only if $h' = \frac{\Lambda}{\sigma} u'$. 
Consequently, a minimizer of $\cF$ exists if and only if there exists a minimizer $u^* \in W^{2,2}_{per}$ of $\int_0^1 \left(W(u) + \frac12 \left( b - \frac{\Lambda^2}{\sigma} \right) |u'|^2 \right)\, d\calL^1$.
By the volume constraint there exists for every $u \in W^{1,2}_{per,vol}$ a point $x \in (0,1)$ such that $u(x)=0$. Now, assume that $\frac12 \left( b - \frac{\Lambda^2}{\sigma} \right) > W(0)$. 
Then it follows for a minimizer $u$ of $\int_0^1 \left(W(u) + \frac12 \left( b - \frac{\Lambda^2}{\sigma} \right) |u'|^2 \right)\, d\calL^1$  that $\int_0^1 |u'|^2 \, d\calL^1 < 1$ which implies for all $y \in (0,1)$
\[
|u(y)| = |u(y) - u(x)| \leq \left( \int_0^1 |u'|^2 \, d\calL^1 \right)^{1/2} < 1.
\]
Hence, $u$ satisfies the Euler Lagrange equation
\[
\left(b - \frac{\Lambda^2}{\sigma} \right) u'' = W'(u) + \lambda, 
\]
where $\lambda \in \R$ is a Lagrange multiplier for the volume constraint $\int_0^1 u \, d\calL^1 = 0$.
In particular, $u \in W^{2,2}$. 
By the argument before this implies that a minimizer for $\cF$ exists.
This shows 2.

     Eventually we prove 3. 
     The inequality $\inf \mathcal{F} \geq -\frac{\Lambda^2}{2\kappa}$ follows from Proposition \ref{prop:lbyoung} and the inequality $\inf \mathcal{F} \leq -\frac{\Lambda^2}{2\kappa}$ from Remark \ref{rem:youngopt} since the energy is monotone in $b$. 
To show non-existence of minimizers, let us assume that there exists a minimizing pair $(u,h) \in \calA$. Then 
\begin{eqnarray*}
 -\frac{\Lambda^2}{2\kappa}=   \mathcal{F}^b (u,h) \geq \int_0^1\left(\frac{\kappa}{2}|h''|^2+\Lambda uh''\right)\,d\calL^1
 \geq \int_0^1\left(\frac{\kappa}{2}|h''|^2-\Lambda |h''|\right)\,d\calL^1\geq -\frac{\Lambda^2}{2\kappa},
\end{eqnarray*}
where the last inequality follows from the fact that $\min_{y\in\R}\left(\frac{\kappa}{2}y^2-\Lambda y\right)=-\frac{\Lambda^2}{2\kappa}.$
In particular, we have 
\begin{eqnarray*}
    W(u(t)) + \frac{\sigma}{2}|h'(t)|^2 =0\text{\qquad and\qquad} \frac{\kappa}{2}|h''(t)|^2 + \Lambda u(t)h''(t) = -\frac{\Lambda^2}{2\kappa} ~~~~~~~~\text{ for a.e. } t\in (0,1),
\end{eqnarray*}
which implies
\begin{eqnarray*}
        |h'(t)| = 0 \text{\qquad and\qquad}
        |h''(t)| = \frac{\Lambda}{\kappa} \text{\qquad  for a.e. } t\in(0,1),
\end{eqnarray*}
which yields a contradiction.
    \end{proof}
    
    \begin{remark}
    Let us note that some difficulties in the proof of item 2. of Proposition \ref{prop:existence} arise from our assumption that $u$ takes values only in $[-1,1]$, i.e., between the wells of $W$. If we allowed for a larger $L^\infty$-bound on $u$, we could (for small values of $b-\Lambda^2/\sigma$) use directly that minimizers of the (unconstraint) Modica-Mortola-type functional on $W^{1,2}$ satisfy the $L^\infty$-constraint (see \cite[Theorem 4.10]{murray-leoni:2016}), which would then imply that minimizers of the constraint problem are smooth enough.
    \end{remark}

    \section*{Acknowledgement}
Funded by the {\em Deutsche Forschungsgemeinschaft}  (DFG, German Research Foundation) through {\em Research Training Group} 2433 DAEDALUS (project ID 384950143), and funded by the Deutsche Forschungsgemeinschaft (DFG, German Research Foundation) under Germany´s Excellence Strategy – The Berlin Mathematics Research Center MATH+ (EXC-2046/1, project ID 390685689).

\bibliographystyle{plain}
\bibliography{andelman2}

\end{document}